\newdimen\tbaselineshift
\theoremstyle{plain}
\newtheorem{theorem}{Theorem}[section]
\newtheorem{lemma}[theorem]{Lemma}
\newtheorem{remark}[theorem]{Remark}
\renewenvironment{proof}[1][\proofname]{\par
  \pushQED{\qed}%
  \normalfont \topsep6\p@\@plus6\p@\relax
  \trivlist
  \item\relax
  {\bfseries
  #1\@addpunct{.}}\hspace\labelsep\ignorespaces
}{%
  \popQED\endtrivlist\@endpefalse
}
\newcommand{\mathsetintension}[2]{%
    \left\{#1\colon #2\right\}%
    }
\newcommand{\dlimit}{%
    \overset{d}{\to}%
    }
\title{Elephant random walk with polynomially decaying steps}
\author{Yuzaburo Nakano}
\address{Graduate~School~of~Engineering~Science, Yokohama~National~University, Yokohama, Japan}
\email{nakano-yuzaburo-zg@ynu.jp}
\date{}
\begin{document}
\begin{abstract}
    In this paper, we introduce a variation of {\color{black} the elephant random walk} whose steps are polynomially decaying. At each time {\mathversion{normal}$k$}, the walker's step size is $k^{-\gamma}$ with $\gamma>0$. We investigate effects of the step size exponent {\mathversion{normal}$\gamma$} and the memory parameter $\alpha\in [-1,1]$ on the long-time behavior of the walker. For fixed $\alpha$, {\color{black} it admits} phase transition from divergence to convergence (localization) at $\gamma_{c}(\alpha)=\max \{\alpha,1/2\}$. This means that large enough memory effect can shift the critical point for localization. Moreover, we obtain quantitative limit theorems which provide a detailed picture of the long-time behavior of the walker.
\end{abstract}
    \maketitle
\renewcommand{\theenumi}{(\roman{enumi})}%
\renewcommand{\labelenumi}{\theenumi}%
\renewcommand{\theenumii}{\alph{enumii})}%
\renewcommand{\labelenumii}{\theenumii}%
\section{Introduction}\label{sec:intro}
The most fundamental problem about random walks is to classify the long-time behavior of the walker. As one of the simplest random processes, we recall the recurrence behavior of the simple random walk (SRW) on the integers. A walker starts at zero, and at each step {\color{black}they flip} a coin and {\color{black}move} to the right if it comes up heads, otherwise {\color{black}move} to the left, where the coin comes up heads with probability $p$.
Let $S_{n}$ denote the position of the walker at time $n$. It is well known that if $p>1/2$ [resp. $p<1/2$], then $S_{n}$ diverges to $+\infty$ [resp. $-\infty$] almost surely (a.s.), while if $p=1/2$, then $S_{n}$ oscillates a.s., and actually {\color{black} they visit} every integer infinitely often a.s.
The former behavior is called transient, and the latter behavior recurrent. From a different perspective, we may regard $S_{n}$ as a sum {\color{black}$\sum_{k=1}^{n} X_{k}$} of independent identically distributed random variables $\{X_{k}\}$ with $P(X_{k}=1)=1-P(X_{k}=-1)=p$. By the above result, the random series {\color{black}$\sum_{k=1}^{n} X_{k}$} diverges with probability one for all $p\in [0,1]$ and its mode of divergence is classified. See Chapter 6 of Stout \cite{StoutAlmostSure} for the classical theory.

A natural generalization of the above question is the behavior of the random series {\color{black} $\Sigma_n := \sum_{k=1}^{n} c_kX_k$} for a fixed real valued sequence $\{c_k\}$. It is well known as the random signs problem (see Section 3.4 of Breiman \cite{BreimanProbability}). As  $\{\Sigma_n\}$ is divergent a.s. if $p \neq 1/2$, hereafter we assume that $p=1/2$. Rademacher \cite{Rademacher1922}, Khintchine and Kolmogorov \cite{KhintchineKolmogoroff1925} showed that $\{\Sigma_n\}$ converges a.s. if and only if {\color{black}$ \sum_{k=1}^{\infty} (c_k)^{2}<+\infty$}.  In the context of random walks, $c_k$ is the step size at time $k$, and thus the random walk $\{ \Sigma_n \}$ with a decreasing positive sequence $\{ c_k \}$ is called {\it a tired drunkard}, which can exhibit localization.  For $c_k =r^{k}$ with $r \in (0,1)$, the tired drunkard sleeps at some point a.s. If $r=1/2$, then the final resting place is uniformly distributed over the interval $[-1,1]$, and it is a long-standing problem to investigate the property of the distribution of the final resting place for other $r$ (see Section 24 of Padmanabhan \cite{PadmanabhanSleepingBeauties}). Another interesting choice is $c_k=k^{-\gamma}$ for $\gamma>0$, as the tired drunkard admits a phase transition: If $\gamma> 1/2$, then the walker eventually rests at some point a.s., while if $\gamma\le 1/2$, then the walker oscillates forever a.s.

Recently, random walks with long-memory have also attracted interests of many researchers. One of them is the elephant random walk (ERW), which is introduced by Sch{\"u}tz and Trimper \cite{SchutzTrimper2004Elephant} in 2004. It is a discrete-time nearest neighbour random walk on the integers with a complete memory of its whole history. We give a formal definition of ERW. The first step $X_1$ of the walker is $+1$ with probability $q\in [0,1]$, and $-1$ with probability $1-q$. For each $n=1,2,\dots$, let $U_n$ be uniformly distributed on $\{1,2,\dots,n\}$, and
\begin{equation}\label{eq:ERWstepdef}
    X_{n+1}=
    \begin{cases}
        X_{U_{n}} &  \text{with probability $p\in [0,1]$,}\\
        -X_{U_{n}} & \text{with probability $1-p$},
    \end{cases}
\end{equation}
where $\mathsetintension{U_n}{n=1,2,\dots}$ is an independent family of random variables. The sequence $\{X_k\}$ generates a one-dimensional random walk $\{T_n\}$ by
\begin{equation}
    T_0\equiv 0,\quad T_n\coloneq \sum_{k=1}^{n}X_k \quad \text{for $n=1,2,\dots$}.\nonumber
\end{equation}
Here $p$ is called the memory parameter. Note that if $p=q=1/2$, then $\{T_{n}\}$ is nothing but the symmetric SRW.

Let $\alpha\coloneq 2p-1$ and $\beta\coloneq 2q-1$. Sch{\"u}tz and Trimper \cite{SchutzTrimper2004Elephant} showed that
\begin{equation}\label{eq:expectation_of_T_n}
    E[T_{n}]=\beta a_n,
\end{equation}
where
\begin{equation}
    a_{0}\coloneq 1, \quad \text{and}\quad a_n\coloneq \prod_{k=1}^{n-1}\!\left(1+\frac{\alpha}{k}\right)\!=\frac{\Gamma(n+\alpha)}{\Gamma(n)\Gamma(\alpha+1)}\quad \text{for $n=1,2,\dots$}.\label{eq:a_n}
\end{equation}
By the Stirling formula for the Gamma functions,
\begin{equation}\label{eq:a_n_asymptotic}
    a_n \sim \frac{n^{\alpha}}{\Gamma(\alpha+1)}\quad \text{as $n\to \infty$}.\nonumber
\end{equation}
Here $x_n\sim y_n$ means that $x_n/y_n$ converges to $1$ as $n\to \infty$. In addition, Sch{\"u}tz and Trimper \cite{SchutzTrimper2004Elephant} showed that there are two distinct regimes about the mean square displacement according to the memory parameter $\alpha$:
\begin{equation}\label{eq:T_n_second_moment}
    E[(T_{n})^{2}]\sim 
    \begin{dcases}
        \frac{1}{1-2\alpha}n & \text{if $\alpha <1/2$},\\
        n\log n & \text{if $\alpha =1/2$},\\
        \frac{1}{(2\alpha -1)\Gamma(2\alpha)}n^{2\alpha} &\text{if $\alpha >1/2$}.
    \end{dcases}
\end{equation}
Based on this, the ERW is called diffusive if $\alpha<1/2$, and superdiffusive if $\alpha>1/2$.

Some years after their work, many authors \cite{BaurBertoin2016ERWPolyaurns,Bercu2018MrtingaleERW,ColettiGavaSchutz2017CLTforERW,ColettiGavaSchutz2017SIPforERW,KubotaTakei2019Gaussian,guerin2024fixedpointequationapproachsuperdiffusive} started to study limit theorems describing the influence of the memory parameter $\alpha$. 
We summarize principal results.
\begin{enumerate}
    \item\label{item:ERW_diffusive} If $\alpha<1/2$, then
    \begin{gather}
        \frac{T_{n}}{\sqrt{n}} \dlimit N\!\left(0,\frac{1}{1-2\alpha}\right)\!,\ \text{and}\ \limsup_{n\to \infty}\pm\frac{T_{n}}{\sqrt{2n\log\log n}}=\frac{1}{\sqrt{1-2\alpha}} \quad \text{a.s.}\label{eq:deffisive_LIL}
    \end{gather}
    Here $\dlimit$ denotes the convergence in distribution and $N(m,{\sigma}^{2})$ is a random variable having the normal distribution with mean $m$ and variance ${\sigma}^{2}$.
    \item\label{item:ERW_critical} If $\alpha =1/2$, then
    \begin{gather}
        \frac{T_{n}}{\sqrt{n\log n}}\dlimit N(0,1),\ \text{and}\ \limsup_{n\to\infty} \pm\frac{T_{n}}{\sqrt{2n\log n\log\log\log n}} =1 \quad \text{a.s.}\label{eq:critical_LIL}
    \end{gather}
    \item\label{item:ERW_superdiffusive} If $\alpha >1/2$, then
    \begin{align}\label{eq:superdiffusive_almost_sure_convergence}
        \lim_{n\to\infty} \frac{T_{n}}{n^{\alpha}}=L \quad \text{a.s. and in $L^{2}$}
    \end{align}
    with $P(L\neq 0)=1$. In addition, if $1/2<\alpha<1$, then
    \begin{equation}
        \frac{T_{n}-Ln^{\alpha}}{\sqrt{n}} \dlimit N\!\left(0,\frac{1}{2\alpha -1}\right)\!,\ \text{and}\ \limsup_{n\to\infty} \pm\frac{T_{n}-Ln^{\alpha}}{\sqrt{2n\log\log n}}=\frac{1}{\sqrt{2\alpha-1}} ~ \text{a.s.}\label{eq:superdiffusive_LIL}
    \end{equation}
\end{enumerate}

From \ref{item:ERW_diffusive}---\ref{item:ERW_superdiffusive} above,
\begin{equation}
    \mbox{for $\alpha <1$,} \quad \lim_{n \to \infty} \dfrac{T_n}{n} = 0 \ \mbox{a.s.,} \label{eq:ERW_LLN}
\end{equation}
which means that the asymptotic speed of $T_{n}$ is $0$ for any $\alpha<1$. Still the ERW admits a phase transition from recurrence to transience at the critical value $\alpha=1/2$ {\color{black}(see \cite{Qin2023recurrence} for the recurrence result in $d$-dimensional lattices)}. From \ref{item:ERW_diffusive}, the behavior of $\{T_{n}\}$ for $\alpha<1/2$ is quite similar to that of the symmetric SRW ($\alpha=\beta=0$). In the superdiffusive case $\alpha\in (1/2,1)$, although $L$ is in \ref{item:ERW_superdiffusive} is non-Gaussian, $Ln^{\alpha}$ should be regarded as ``random drift'' produced by the influence of long-memory, and the fluctuation from it is still Gaussian. The intermediate behavior is observed in the critical case \ref{item:ERW_critical}.

In this paper, we consider a generalization of ERW whose step sizes are polynomially decaying. Our model is defined as follows. Let $\{X_{k}\}_{k\ge 1}$ be the steps of ERW defined by \eqref{eq:ERWstepdef}. The elephant random walk with polynomially decaying steps $\{S_{n}\}$ is
\begin{equation}\label{eq:defineERWdecay}
    S_{0}\coloneq 0, \quad S_{n}\coloneq \sum_{k=1}^{n}\frac{X_{k}}{k^{\gamma}} \quad \text{for $n=1,2,\dots$},
\end{equation}
with $\gamma >0$. Note that if $\gamma=0$, then $\{S_{n}\}$ is the original ERW.

Our paper deals with the almost sure long-time behavior of the walker. For fixed $\alpha\in [-1,1]$, as $\gamma$ increases, $\{S_{n}\}$ admits a phase transition from divergence to convergence as the critical value $\gamma_{c}=\gamma_{c}(\alpha)\coloneq \max \{\alpha, 1/2\}$. Moreover, we give the classification of the modes of divergence of $\{S_{n}\}$. If $\alpha\le 1/2$ and $\gamma<\gamma_{c}(\alpha)=1/2$, then $\{S_{n}\}$ oscillates a.s. like the symmetric SRW with polynomially decaying steps. On the other hand, if $\alpha>1/2$ and $\gamma \le \gamma_{c}(\alpha)=\alpha$, then $\{S_{n}\}$ diverges to $+\infty$ or $-\infty$ a.s.


Recently there have been many studies on variations of the ERW. Somewhat similar settings
to ours are the ERW with random step sizes (see \cite{DedeckerFanHuMerlevede2023CLTrandomstepsizes,FanShao2022Cramer,Zhang2024does}), and step-reinforced/counterbalanced random walks (see e.g. \cite{Busingershark,Bertoinstepreinforced,Bertoinnoisereinforced,Bertoincounterbalancing,HuZangSLTstepreinforced,hua2024strongapproximationssurecentral}). Unlike those models, the ERW with polynomially decaying steps can localize, which is the principal novelty of our model.

{\color{black} In the higher dimensional case, the walker is expected to exhibit more complicated behaviour depending not only on $\alpha$ and $\gamma$ but also on the spatial dimension. This is one of very important future problems. In this paper, we would like to focus on one-dimensional case and give rather complete picture of phase transition, with several limit theorems.}
\section{Main results}\label{sec:results}
Out first theorem describes the quantitative behavior of the ERW with polynomially decaying steps $\{S_{n}\}$, defined by \eqref{eq:defineERWdecay}.
\begin{theorem}\label{thm:main}
    \begin{enumerate}
        \item If $\alpha\in [-1,1/2]$, then
        \begin{equation}
            P\!\left(-\infty =\liminf_{n\to\infty}S_{n}<\limsup_{n\to\infty}S_{n}=+\infty\right)=1 \label{eq:subdiverge}
        \end{equation}
        for any $\gamma\in (0,1/2]$ with $\displaystyle \gamma\neq \gamma_{0}(\alpha)\coloneq \max \{ \alpha, -\alpha/(1-2\alpha)\}$. On the other hand, $\{S_{n}\}$ converges with probability one for $\gamma >1/2$. 
        \item If $\alpha\in (1/2,1]$, then
        \begin{equation}
            P\!\left(\lim_{n\to\infty} S_{n}=-\infty\ \text{or}\ \lim_{n\to\infty}S_{n}=+\infty\right)=1 \label{eq:superdiver}
        \end{equation}
        for any $\gamma\in (0,\alpha]$, while $\{S_{n}\}$ converges with probability one for $\gamma >\alpha$.
    \end{enumerate}
\end{theorem}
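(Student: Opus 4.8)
The plan is to treat, within each regime, the convergence half $\gamma>\gamma_c$ and the divergence half separately, basing everything on the martingale structure of the ERW. Write $\mathcal{F}_n=\sigma(X_1,\dots,X_n)$; from $E[X_n\mid\mathcal{F}_{n-1}]=\frac{\alpha}{n-1}T_{n-1}$ the normalized walk $M_n=T_n/a_n$ is a martingale with increments $M_n-M_{n-1}=a_n^{-1}(X_n-E[X_n\mid\mathcal{F}_{n-1}])$, and the analysis reduces to comparing the weights $k^{-\gamma}$ against the growth $a_k\sim k^{\alpha}/\Gamma(\alpha+1)$ and against the fluctuation scale of $T_k$ supplied by \eqref{eq:T_n_second_moment} and \ref{item:ERW_diffusive}--\ref{item:ERW_superdiffusive}.

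For the convergence assertions I use two devices. When $\alpha>1/2$ and $\gamma>\alpha$, summation by parts gives $S_n=n^{-\gamma}T_n+\sum_{k=1}^{n-1}(k^{-\gamma}-(k+1)^{-\gamma})T_k$; by \eqref{eq:superdiffusive_almost_sure_convergence} the boundary term is $O(n^{\alpha-\gamma})\to0$ and the series converges absolutely, since its general term is $O(k^{\alpha-\gamma-1})$ with $\alpha-\gamma-1<-1$, so $S_n$ converges a.s. When $\alpha\le1/2$ and $\gamma>1/2$, I use the Doob decomposition $S_n=A_n+N_n$, with $N_n$ the martingale whose increments are $k^{-\gamma}(X_k-E[X_k\mid\mathcal{F}_{k-1}])$ and $A_n=\sum_{k\ge2}\frac{\alpha}{k-1}k^{-\gamma}T_{k-1}$ predictable. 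Here $\langle N\rangle_\infty=\sum_k k^{-2\gamma}\,\variance(X_k\mid\mathcal{F}_{k-1})<\infty$, so $N_n$ converges a.s., while $E|A_n|$ is summable because $E|T_{k-1}|=O(k^{1/2})$ (up to $\sqrt{\log k}$ when $\alpha=1/2$) makes $\sum_k k^{-\gamma-1/2}$ the controlling series; thus $A_n$ converges and so does $S_n$.

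The divergence in the superdiffusive range $\alpha>1/2$, $\gamma\le\alpha$ follows from the same Abel summation: now $n^{-\gamma}T_n\sim Ln^{\alpha-\gamma}$ and the series contributes a comparable multiple of $n^{\alpha-\gamma}$, or of $\log n$ when $\gamma=\alpha$, so $S_n\sim cLn^{\alpha-\gamma}$ with $c>0$ (respectively $S_n\sim\gamma L\log n$). Since $P(L\neq0)=1$ by \eqref{eq:superdiffusive_almost_sure_convergence}, the sign of $L$ decides whether $S_n\to+\infty$ or $-\infty$, which is exactly \eqref{eq:superdiver}.

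The genuinely hard part is the oscillation \eqref{eq:subdiverge} for $\alpha\le1/2$, $\gamma\le1/2$, where $E[S_n]$ is negligible against the fluctuations and the claim amounts to a law of the iterated logarithm for $S_n$. The plan is to choose a deterministic $c_n\to0$ making $Y_n=S_n+c_nT_n$ a martingale; the condition forces the recursion $c_{k-1}=c_k+\frac{\alpha}{k-1}(k^{-\gamma}+c_k)$, whose minimal solution behaves like $\frac{\alpha}{\gamma-\alpha}k^{-\gamma}$ when $\gamma>\alpha$ and like $k^{-\alpha}$ when $\gamma<\alpha$. One then computes $\langle Y\rangle_n$, $\langle M\rangle_n$ and $\langle Y,M\rangle_n$, applies a martingale CLT together with the associated LIL to the pair $(Y_n,M_n)$, and reads off the behaviour of the linear combination $S_n=Y_n-c_na_nM_n$: away from the single exponent $\gamma_0(\alpha)$ the limiting quadratic-variation constants are non-degenerate, $S_n$ inherits a LIL with a strictly positive constant, and hence $\limsup_n S_n=+\infty$ and $\liminf_n S_n=-\infty$ a.s., while $E[S_n]$ stays of smaller order than the LIL normalizer. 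The main obstacle is precisely this variance computation and the degeneracy at $\gamma_0$: for $\alpha\ge0$ one has $\gamma_0=\alpha$, the exponent at which the homogeneous and forced modes of the $c_n$-recursion coincide, so that $Y_n$ and $M_n$ become asymptotically collinear and the leading fluctuations of $S_n$ partly cancel and must be recovered from lower-order terms; an analogous borderline cancellation occurs at $\gamma_0=-\alpha/(1-2\alpha)$ when $\alpha<0$. Isolating this exponent, together with the routine but necessary step of controlling the conditional-variance correction $\frac{\alpha^2}{(k-1)^2}T_{k-1}^2$ uniformly so that $\langle Y\rangle_n$ matches its deterministic equivalent, is where the delicate work lies.
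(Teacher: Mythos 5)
Your treatment of the convergence regimes and of the superdiffusive divergence is correct and essentially complete: the Abel summation $S_n=n^{-\gamma}T_n+\sum_{k<n}(k^{-\gamma}-(k+1)^{-\gamma})T_k$ combined with $T_k\sim Lk^{\alpha}$, $P(L\neq 0)=1$, gives both the a.s.\ convergence for $\gamma>\alpha>1/2$ and the divergence to $\pm\infty$ for $\gamma\le\alpha$ (with the correct constant $\alpha L/(\alpha-\gamma)$), and for $\alpha\le 1/2$, $\gamma>1/2$ your $L^1$ bound $E|T_k|=O(\sqrt{k\log k})$ on the drift increments is a legitimate substitute for the paper's a.s.\ bound $T_n=o(\sqrt{n}\log n)$ from the LIL. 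These are minor variants of the paper's route through the Doob decomposition $S_n=M_n+A_n$ and the identity $A_n=\beta+\alpha\sum_{k<n}T_k/(k(k+1)^{\gamma})$.

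The gap is in the oscillation statement \eqref{eq:subdiverge}, which is the only hard part of (i), and your argument there is a program rather than a proof. The construction of the martingale $Y_n=S_n+c_nT_n$ is sound, and it correctly isolates the degeneracy locus $\gamma_0(\alpha)$; but the decisive step --- ``applies a martingale CLT together with the associated LIL to the pair $(Y_n,M_n)$ and reads off the behaviour of the linear combination'' --- is exactly the step that does not exist off the shelf. The combination you need is $S_n=Y_n-c_na_nM_n$ with a time\emph{-dependent} deterministic coefficient $c_na_n\asymp n^{\alpha-\gamma}$, so $S_n$ is not a fixed linear combination of two martingales but a triangular array of weighted martingale differences; the scalar LIL of Heyde--Stout type applies to each of $Y_n$ and $M_n$ separately but not to this combination, and a joint (Strassen-type) LIL for the dependent pair with the correct asymptotic correlation is precisely the tool you would have to build --- the paper's own remark flags this as the open obstruction (``we have to treat a sum of two dependent random variables''). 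You also defer the verification that the limiting variance is non-degenerate away from $\gamma_0(\alpha)$, which is the whole content of the case. The paper circumvents all of this with a cruder but complete device: from $\bigl|(1-\alpha/\gamma)S_n-(M_n-\alpha T_n/(\gamma n^{\gamma}))\bigr|\le K$ it bounds $\limsup_n\pm S_n/\sqrt{2n^{1-2\gamma}\log\log n}$ from below by the \emph{maximum} of $\limsup x_n+\liminf y_n$ and $\liminf x_n+\limsup y_n$, i.e.\ by $\bigl|\tfrac{1}{\sqrt{1-2\gamma}}-\tfrac{|\alpha|}{\gamma\sqrt{1-2\alpha}}\bigr|$, using only the separate LILs for $M_n$ (Theorem \ref{thm:Mn}) and for $T_n$ (from \eqref{eq:deffisive_LIL}, \eqref{eq:critical_LIL}); positivity of this quantity for $\gamma\neq\gamma_0(\alpha)$ already yields $\limsup S_n=+\infty$ and $\liminf S_n=-\infty$ a.s. Either supply this one-sided sandwich argument, or actually carry out the bivariate quadratic-variation computation and prove the joint LIL you invoke; as written, case (i) with $\gamma\le 1/2$ is not established.
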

For a summary of the above theorem, see Fig. \ref{fig:phase_diagram}.
\begin{remark}
    {\color{black} Information about the distribution of the limiting random variable $S_{\infty}$ for $\gamma>\gamma_{c}(\alpha)$ is scarce. This remains a long-standing problem even for the case $\alpha=0$, where $S_{n}$ is a sum of independent random variables. In Appendix \ref{sec:A1}, we show that $S_{n}\to S_{\infty}$ in $L^{2}$ if $\gamma>\gamma_{c}(\alpha)$. Thus, we can obtain semi-explicit formulae for the average and the second moment of $S_{\infty}$.}
\end{remark}
\begin{figure}[h]
    \centering
    \begin{tikzpicture}
        \node[below] (O) at (0,0) {O}; \node[below] (L) at (-4.8,0) {$-1$};
        \node[below] (R) at (4.8,0) {$1$}; \node[below] (M) at (2.4,0) {$1/2$};
        \node[above left] (N) at (0,2.4) {$1/2$};
        \draw (2.4,2.4) -- (-4.8,2.4);
        \draw (-4.8,0) -- (-4.8,7.2); \draw (4.8,0)--(4.8,7.2);
        \draw (2.376,0) -- (2.376,2.4); \draw[dotted,line width=1pt] (2.448,0) -- (2.448,2.4);
        \draw[->,>=stealth] (-6.72,0) -- (6.72,0)node[right] {$\alpha$};
        \draw[->,>=stealth] (0,0) -- (0,7.2)node[below right] {$\gamma$};
        \draw[dashed] plot[domain=0:2.4](\x,\x); \draw plot[domain=2.4:4.8](\x,\x) node[right] {$\gamma=\alpha$}; 
        \draw[dashed] plot[domain=0:-4.8,smooth](\x,{-\x/(1-(\x /2.4))}) node[left] {$\displaystyle \gamma=\frac{-\alpha}{1-2\alpha}$};
    
        \node (a) at (-2.4,4.8) {convergent};  \node (b) at (2.4,4.8) {convergent};
        \node[align = left] (d) at (-3.36,0.48) {oscillatory}; \node[align=left] (c) at (-1.2,1.68) {oscillatory};
        \node[align=center] (e) at (3.6,1.68) {$\displaystyle \lim_{n\to\infty} S_{n}=+\infty$\\or \\$\displaystyle \lim_{n\to\infty} S_{n}=-\infty$};
        \node[align = left] (f) at (0.84,1.776) {oscillatory}; \node[align=left] (g) at (1.44,0.48) {oscillatory};
    \end{tikzpicture}
    \caption{The classification of the long-time behavior of $\{S_{n}\}$.}
    \label{fig:phase_diagram}
\end{figure}
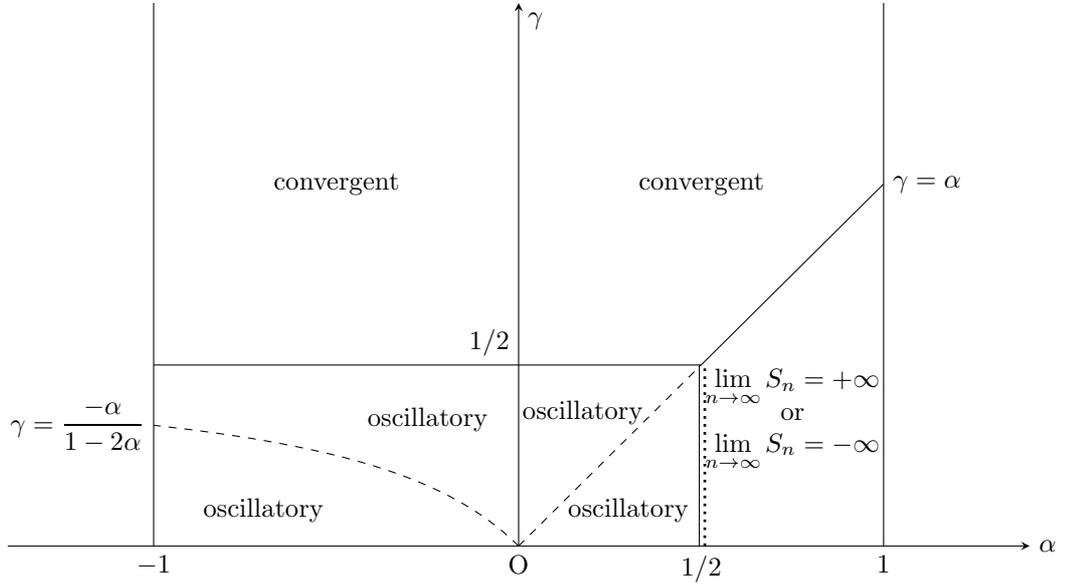

As a quantitative result, we have the central limit theorem (CLT) and the law of the iterated logarithm (LIL) for $\{S_{n}\}$.
\begin{theorem}\label{thm:Limittheorems}
    \begin{enumerate}
        \item \label{enum:Thm2_alphasub}Suppose that $\alpha\in [-1,1/2)$.
        \begin{enumerate}
            \item \label{enum:Thm2_alpahsub_gammasub}For any $\gamma \in (0,1/2)$ with $\displaystyle \gamma\neq \gamma_{0}(\alpha)$, there exists positive numbers $c_{1}(\alpha,\gamma)$ and $c_{2}(\alpha,\gamma)$ depending only on $\alpha$ and $\gamma$ such that
            \begin{equation}
                c_{1}(\alpha,\gamma)\le \limsup_{n\to\infty}\pm\frac{S_{n}}{\sqrt{2n^{1-2\gamma}\log\log n}}\le c_{2}(\alpha,\gamma) \quad \text{a.s.} \nonumber
            \end{equation}
            \item \label{enum:Thm2_alphasub_gammacritical}For $\gamma=1/2$, $\displaystyle \frac{S_{n}}{\sqrt{\log n}}\dlimit N\!\left(0,\frac{1}{(1-2\alpha)^{2}}\right)$ and
            \begin{equation}
                \limsup_{n\to\infty} \pm\frac{S_{n}}{\sqrt{2\log n\log\log\log n}}=\frac{1}{1-2\alpha}\quad \text{a.s.}\nonumber
            \end{equation}
        \end{enumerate}
        \item \label{enum:Thm2_alphacritical}Suppose that $\alpha=1/2$. For $\gamma\in (0,1/2)$, $\displaystyle \frac{S_{n}}{\sqrt{n^{1-2\gamma}\log n}}\dlimit N\!\left(0,\frac{1}{(1-2\gamma)^{2}}\right)$ and 
        \begin{equation}
           \limsup_{n\to\infty} \pm\frac{S_{n}}{\sqrt{2n^{1-2\gamma}\log n\log\log\log n}}=\frac{1}{1-2\gamma}\quad \text{a.s.}\nonumber
        \end{equation}
        \item \label{enum:Thm2_alphasuper}Suppose that $\alpha\in (1/2,1]$. Here $L$ is the random variable defined by \eqref{eq:superdiffusive_almost_sure_convergence}.
        \begin{enumerate}
            \item \label{enum:Thm2_alphasuper_gammasub}For \(\gamma\in (0,\alpha)\), $\displaystyle \lim_{n\to\infty} \frac{S_{n}}{n^{\alpha -\gamma}}=\frac{\alpha L}{\alpha-\gamma}$ with probability one. Moreover, for the fluctuation of $S_{n}$ from $\displaystyle \frac{\alpha L}{\alpha-\gamma}n^{\alpha-\gamma}$, the followings hold.
            \begin{itemize}
                \item If $\gamma\in (0,1/2)$ and $\displaystyle \gamma\neq \frac{-\alpha+\alpha\sqrt{\alpha^{2}+2\alpha-1}}{2\alpha-1}$, then there exists positive constants $c_{3}(\alpha,\gamma)$ and $c_{4}(\alpha,\gamma)$ such that
                \begin{equation}
                    c_{3}(\alpha,\gamma)\le \limsup_{n\to\infty}\frac{S_{n}-\frac{\alpha L}{\alpha-\gamma}n^{\alpha-\gamma}}{\sqrt{2n^{1-2\gamma}\log\log n}}\le c_{4}(\alpha,\gamma) \quad \text{a.s.}\nonumber
                \end{equation}
                \item If $\gamma=1/2$, then $\displaystyle \limsup_{n\to\infty}\pm\frac{S_{n}-\frac{\alpha L}{\alpha-\gamma}n^{\alpha-\gamma}}{\sqrt{2\log n\log\log\log n}}=1$ a.s.
                \item If $\gamma\in (1/2,\alpha)$, then the sequence $\left\{S_{n}-\frac{\alpha L}{\alpha-\gamma}n^{\alpha-\gamma}\right\}$ is bounded a.s.
            \end{itemize}
            \item \label{enum:Thm2_alphasuper_gammacritical}For \(\gamma =\alpha\), $\displaystyle \lim_{n\to\infty} \frac{S_{n}}{\log n}=\alpha L$ with probability one.
            \item \label{enum:Thm2_alphasuper_gammasuper}For $\gamma>\alpha$, $\{S_{n}\}$ converges almost surely. Letting $\displaystyle S_{\infty}\coloneq \lim_{n\to\infty}S_{n}$ a.s., we have
            \begin{equation}
                P\!\left(-\infty<\liminf_{n\to\infty}\frac{S_{\infty}-S_{n}}{n^{\alpha-\gamma}}\le \limsup_{n\to\infty}\frac{S_{\infty}-S_{n}}{n^{\alpha-\gamma}}<+\infty\right)=1.\label{eq:superdiffusive_gammasuper}
            \end{equation}
        \end{enumerate}
    \end{enumerate}
\end{theorem}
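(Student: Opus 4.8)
The plan is to analyze $\{S_{n}\}$ through the martingale structure already underlying the ERW. Writing $\mathcal{F}_{n}:=\sigma(X_{1},\dots,X_{n})$, the defining relation \eqref{eq:ERWstepdef} gives $E[X_{n+1}\mid\mathcal{F}_{n}]=\alpha T_{n}/n$, so that $M_{n}:=T_{n}/a_{n}$ is a martingale with increments $\epsilon_{k}/a_{k}$, where $\epsilon_{k}:=X_{k}-\alpha T_{k-1}/(k-1)$ are bounded, orthogonal martingale differences with $E[\epsilon_{k}^{2}\mid\mathcal{F}_{k-1}]=1-(\alpha T_{k-1}/(k-1))^{2}$. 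Substituting $T_{k-1}=a_{k-1}M_{k-1}$ and swapping the order of summation yields the two representations
\[
    S_{n}=\underbrace{\alpha\sum_{k=2}^{n}\frac{T_{k-1}}{(k-1)k^{\gamma}}}_{=:\,D_{n}}\;+\;\underbrace{\sum_{k=1}^{n}\frac{\epsilon_{k}}{k^{\gamma}}}_{=:\,N_{n}}\;=\;\sum_{j=1}^{n}\epsilon_{j}\,\psi_{n,j},
\]
where $\psi_{n,j}:=j^{-\gamma}+a_{j}^{-1}\sum_{k=j+1}^{n}\alpha a_{k-1}/((k-1)k^{\gamma})$. Here $D_{n}$ is a predictable random drift and $N_{n}$ a genuine martingale, while the second form exhibits $S_{n}$ as a weighted sum of the $\epsilon_{j}$ with explicit deterministic weights. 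Using $a_{k}\sim k^{\alpha}/\Gamma(\alpha+1)$, the weights are governed by the sign of $\alpha-\gamma$: if $\gamma>\alpha$ the inner sum converges and $\psi_{n,j}\to\psi_{\infty,j}\sim\frac{\gamma}{\gamma-\alpha}j^{-\gamma}$, so $S_{n}$ is asymptotic to a \emph{fixed} square-integrable martingale; if $\gamma<\alpha$ the inner sum grows and $\psi_{n,j}\sim\frac{\alpha}{\alpha-\gamma}n^{\alpha-\gamma}j^{-\alpha}$, a genuine \emph{triangular array}.

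With this in hand I would first read off the almost-sure leading order. In the superdiffusive regime \eqref{eq:superdiffusive_almost_sure_convergence} gives $T_{k-1}\sim L(k-1)^{\alpha}$, and a Kronecker/Toeplitz argument applied to $D_{n}$ yields $D_{n}\sim\frac{\alpha L}{\alpha-\gamma}n^{\alpha-\gamma}$ for $\gamma<\alpha$ and $D_{n}\sim\alpha L\log n$ for $\gamma=\alpha$; since $N_{n}$ is of lower order this proves \ref{enum:Thm2_alphasuper_gammasub} and \ref{enum:Thm2_alphasuper_gammacritical}. For the Gaussian fluctuations the key quantity is the predictable quadratic variation $\sigma_{n}^{2}:=\sum_{j\le n}\psi_{n,j}^{2}E[\epsilon_{j}^{2}\mid\mathcal{F}_{j-1}]$. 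Because $T_{k-1}/(k-1)\to0$ for $\alpha<1$ one has $E[\epsilon_{j}^{2}\mid\mathcal{F}_{j-1}]\to1$, and inserting the weight asymptotics gives $\sigma_{n}^{2}\sim C(\alpha,\gamma)\,n^{1-2\gamma}$ when $\alpha<1/2$ (with $C>0$ an explicit constant that degenerates exactly at $\gamma=\gamma_{0}(\alpha)$), $\sigma_{n}^{2}\sim\frac{1}{(1-2\gamma)^{2}}n^{1-2\gamma}\log n$ when $\alpha=1/2$, and $\sigma_{n}^{2}\sim\frac{1}{(1-2\alpha)^{2}}\log n$ when $\alpha<1/2=\gamma$. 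The central limit theorems \ref{enum:Thm2_alphasub_gammacritical} and \ref{enum:Thm2_alphacritical} then follow from the Lindeberg martingale CLT, whose negligibility condition is immediate since each $\psi_{n,j}$ is uniformly small relative to $\sigma_{n}$.

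For the laws of the iterated logarithm I would invoke a martingale LIL (Stout, \cite{StoutAlmostSure}). When $\gamma>\alpha$ the weights stabilize, $S_{n}$ is asymptotic to the fixed martingale $\sum_{j}\epsilon_{j}\psi_{\infty,j}$, and the LIL follows with normalization $\sqrt{2\sigma_{n}^{2}\log\log\sigma_{n}^{2}}$, yielding the exact constants at $\gamma=1/2$. In the superdiffusive regime with $\gamma<\alpha$ I would first isolate the drift by writing $M_{k-1}=M_{\infty}-R_{k-1}$ with $M_{\infty}=\Gamma(\alpha+1)L$ and tail $R_{k-1}=\sum_{j\ge k}\epsilon_{j}/a_{j}$; subtracting $\frac{\alpha L}{\alpha-\gamma}n^{\alpha-\gamma}$ cancels the divergent part of $D_{n}$ and leaves, after a computation,
\[
    S_{n}-\frac{\alpha L}{\alpha-\gamma}n^{\alpha-\gamma}=\frac{\gamma}{\gamma-\alpha}\,N_{n}-\frac{\alpha}{\alpha-\gamma}\,n^{\alpha-\gamma}\sum_{j>n}\frac{\epsilon_{j}}{j^{\alpha}}+O(1)\qquad\text{a.s.},
\]
a superposition of a forward martingale and a rescaled martingale tail, each of order $n^{1/2-\gamma}$. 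For $\gamma\in(1/2,\alpha)$ both pieces vanish (as $\sum_{k}k^{-2\gamma}<\infty$ and $n^{\alpha-\gamma}\cdot n^{1/2-\alpha}\to0$), giving boundedness; for $\gamma=1/2$ the forward martingale dominates and Stout's theorem applies; for $\gamma<1/2$ the two pieces are comparable and carry the LIL with the stated two-sided bounds $c_{3},c_{4}$.

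The hard part, I expect, is precisely the LIL in the two regimes where the weights do not stabilize: $0<\alpha<1/2$ with $\gamma<\alpha$, and $\alpha>1/2$ with $\gamma<1/2$. There the normalized object is not a martingale in $n$ but the superposition above (respectively a genuine triangular array), so Stout's theorem does not apply off the shelf; obtaining the sharp a.s. upper and lower bounds would require maximal inequalities combined with blocking and Borel--Cantelli along geometric subsequences, and would also force the exclusion of the degenerate exponents $\gamma_{0}(\alpha)$ and $\frac{-\alpha+\alpha\sqrt{\alpha^{2}+2\alpha-1}}{2\alpha-1}$, at which the relevant second-order constant degenerates and the stated normalization fails. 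By contrast the localization case \ref{enum:Thm2_alphasuper_gammasuper} ($\gamma>\alpha$) is soft: $N_{n}$ converges a.s. and in $L^{2}$ because $\sum_{k}k^{-2\gamma}<\infty$, and $D_{n}$ converges since $\sum_{k}k^{\alpha-1-\gamma}<\infty$, so $S_{n}\to S_{\infty}$; the rate follows because the drift tail $\alpha\sum_{k>n}T_{k-1}/((k-1)k^{\gamma})\sim\frac{\alpha L}{\gamma-\alpha}n^{\alpha-\gamma}$ dominates the martingale tail $N_{\infty}-N_{n}=O(n^{1/2-\gamma})$, whence $(S_{\infty}-S_{n})/n^{\alpha-\gamma}$ converges a.s. to the finite, a.s. nonzero limit $\frac{\alpha L}{\gamma-\alpha}$, giving \eqref{eq:superdiffusive_gammasuper}.
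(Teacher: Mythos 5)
Your representation $S_{n}=\sum_{j\le n}\epsilon_{j}\psi_{n,j}$ is correct and is a genuinely different route from the paper's: the paper never unrolls $T_{k-1}$ into the elementary differences $\epsilon_{j}$, but instead derives the implicit relation \eqref{eq:Snineq}, $\bigl|(1-\alpha/\gamma)S_{n}-(M_{n}-\alpha T_{n}/(\gamma n^{\gamma}))\bigr|\le K$, and then combines the known results \eqref{eq:deffisive_LIL}, \eqref{eq:critical_LIL}, \eqref{eq:superdiffusive_LIL} for $T_{n}$ with Theorem \ref{thm:Mn} for the martingale part via the elementary inequalities $\limsup(x_{n}+y_{n})\le\limsup x_{n}+\limsup y_{n}$ and $\limsup(x_{n}+y_{n})\ge\limsup x_{n}+\liminf y_{n}$. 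Your treatment of the almost-sure leading order in \ref{enum:Thm2_alphasuper}\ref{enum:Thm2_alphasuper_gammasub} and \ref{enum:Thm2_alphasuper_gammacritical}, of the localization case \ref{enum:Thm2_alphasuper_gammasuper} (where you in fact obtain convergence of $(S_{\infty}-S_{n})/n^{\alpha-\gamma}$ to $\alpha L/(\gamma-\alpha)$, stronger than \eqref{eq:superdiffusive_gammasuper}), and of the two CLTs via the martingale CLT for the array $\{\epsilon_{j}\psi_{n,j}\}$ is sound, and your variance asymptotics in the cases $\gamma=1/2$ and $\alpha=1/2$ are correct.

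The genuine gap is exactly where you flag it, and it cannot be left as a flag: the two-sided LIL bounds $c_{1}\le\cdot\le c_{2}$ and $c_{3}\le\cdot\le c_{4}$ are the main content of items \ref{enum:Thm2_alphasub}\ref{enum:Thm2_alpahsub_gammasub} and \ref{enum:Thm2_alphasuper}\ref{enum:Thm2_alphasuper_gammasub}, and ``maximal inequalities combined with blocking and Borel--Cantelli along geometric subsequences'' for a triangular array whose weights do not stabilize is a substantial programme you have not carried out. You do not need it: your own decompositions exhibit the quantity of interest as a sum of two terms each of which separately obeys an LIL ($N_{n}$ by Theorem \ref{thm:Mn}, and $T_{n}/n^{\gamma}$ resp.\ the rescaled tail martingale via \eqref{eq:deffisive_LIL} resp.\ \eqref{eq:superdiffusive_LIL}), so sub-/superadditivity of $\limsup$ immediately gives the upper constant as the sum of the two individual constants and the lower constant as their difference, positive precisely away from the excluded exponents --- which is all the theorem asserts and is exactly what the paper does. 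Relatedly, your stated reason for the exclusions is wrong: the asymptotic variance of the array is $\sigma_{n}^{2}\sim C(\alpha,\gamma)\,n^{1-2\gamma}$ with $C(\alpha,\gamma)=(\gamma-\alpha)^{-2}\int_{0}^{1}\left(\gamma t^{-\gamma}-\alpha t^{-\alpha}\right)^{2}dt$, which does \emph{not} vanish at $\gamma=\gamma_{0}(\alpha)$ when $\alpha<0$ (it can vanish only if $\gamma t^{-\gamma}\equiv\alpha t^{-\alpha}$ on $(0,1)$, impossible for $\gamma>0>\alpha$). The exclusion $\gamma\neq\gamma_{0}(\alpha)$ is an artifact of the triangle-inequality lower bound, as the paper's own remark acknowledges; so the claim that the constant ``degenerates exactly at $\gamma_{0}(\alpha)$'' must be retracted, and a completed triangular-array LIL would be expected to remove that exclusion rather than explain it.
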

\begin{remark}
    {\color{black} Our proof of Theorem 2 is based on Equation \eqref{eq:Snineq} below. To obtain CLT and LIL for $S_{n}$, we have to treat a sum of two dependent random variables, and that is the reason why our LIL are weaker than usual, and our CLT is restricted to a specific case. A similar problem arises also for the ERW with random step sizes (see \cite{DedeckerFanHuMerlevede2023CLTrandomstepsizes}). We need to establish a new approach to deal with such problems. The restriction $\gamma\neq \gamma_{0}(\alpha)$ for $\alpha<0$ will be circumvented by this. On the other hand, another problem arises when $\alpha>0$ for $\gamma=\gamma_0(\alpha)$, since the crucial Equation \eqref{eq:Snineq} degenerates.
}
\end{remark}
\begin{remark}
    {\color{black} The behavior of $\sum_{k=1}^{n}c_{k}X_{k}$ for general coefficients $c_{k}$ is intended as a subject for future studies. Some of our proofs work for $c_{k}\sim k^{-\gamma}$ as well, but such generalization might affect the behavior below or near the critical line.}
\end{remark}
\section{Proofs}\label{sec:proofs}
Let $\mathcal{F}_{0}$ be the trivial $\sigma$-field, $\mathcal{F}_n$ be the $\sigma$-field generated by $X_1,\dots,X_n$, and $H_{n}\coloneq \# \mathsetintension{1\le j\le n}{X_j=+1}$. For $n=1,2,\dots$, the conditional distribution of $X_{n+1}$ given the history up to time $n$ is
\begin{align}\label{eq:Xnconditionaldistr}
    P(X_{n+1}=+ 1\mid \mathcal{F}_n)&=\frac{H_n}{n}\cdot p+\!\left(1-\frac{H_n}{n}\right)\!\cdot (1-p)\nonumber\\
    &=\alpha\cdot \frac{H_n}{n}+(1-\alpha)\cdot \frac{1}{2},\nonumber
\end{align}
and the conditional expectation of $X_{n+1}$ is
\begin{equation}
    E[X_{n+1}\mid \mathcal{F}_{n}]=P(X_{n+1}=+1\mid \mathcal{F}_n)-P(X_{n+1}=-1 \mid \mathcal{F}_n)=\alpha\cdot \frac{T_{n}}{n}.\label{eq:Xnconditionalexp}
\end{equation}
Thus, we have
\begin{align}
    E[T_{n+1}\mid \mathcal{F}_n]&=E[T_{n} + X_{n+1}\mid \mathcal{F}_n]=\!\left(1+\frac{\alpha}{n}\right)\!T_{n}.\nonumber
\end{align}
To analyze the long-time behavior of \(S_{n}\), we use the Doob decomposition:
\begin{align}\label{eq:hatSn_decomposition}
    S_{n}&=\sum_{k=1}^{n}\frac{X_{k}-E[X_{k}\mid \mathcal{F}_{k-1}]}{k^{\gamma}}+\sum_{k=1}^{n}\frac{E[X_{k}\mid \mathcal{F}_{k-1}]}{k^{\gamma}}\eqcolon M_{n}+A_{n}.
\end{align}
We give the proofs of the main results in separate subsections. In Section \ref{subsec:martingale} we prove limit theorems for $\{M_{n}\}$ using a standard martingale limit theory. A useful expression of $\{A_{n}\}$ in terms of $\{S_{n}\}$ and $\{T_{n}\}$ will be given in Section \ref{subsec:An}. Theorems \ref{thm:Limittheorems} and \ref{thm:main} will be proved in Sections \ref{subsec:ProofofThm2} and \ref{subsec:proof_main}.
\subsection{Limit theorems for the martingale part $\{M_{n}\}$}\label{subsec:martingale}
For the martingale part $\{M_{n}\}$, we have the following CLT and LIL.
\begin{theorem}\label{thm:Mn}
    Suppose that $\alpha\in [-1,1)$.
    \begin{enumerate}
        \item \label{thm:Mn_sub}If $\gamma<1/2$, then
        \begin{center}
            $\displaystyle \frac{M_{n}}{n^{1/2-\gamma}}\dlimit N\!\left(0,\frac{1}{1-2\gamma}\right)$, and $\displaystyle \limsup_{n\to\infty} \pm \frac{M_{n}}{\sqrt{2n^{1-2\gamma}\log\log n}}=\frac{1}{\sqrt{1-2\gamma}}$ a.s.
        \end{center}
        \item \label{thm:Mn_critical}If $\gamma=1/2$, then
        \begin{center}
            $\displaystyle \frac{M_{n}}{\sqrt{\log n}}\dlimit N\!\left(0,1\right)$, and $\displaystyle \limsup_{n\to\infty} \pm \frac{M_{n}}{\sqrt{2\log n\log\log\log n}}=1$ a.s.
        \end{center}
        \item \label{thm:Mn_super}If $\gamma>1/2$, then
        \begin{center}
            $\displaystyle \frac{M_{n}-M_{\infty}}{n^{1/2-\gamma}}\dlimit N\!\left(0,\frac{1}{2\gamma-1}\right)$, and $\displaystyle \limsup_{n\to\infty} \pm \frac{M_{n} - M_{\infty}}{\sqrt{2n^{1-2\gamma}\log\log n}}=\frac{1}{\sqrt{2\gamma-1}}$ a.s.,
        \end{center}
        where $\displaystyle M_{\infty}\coloneq \lim_{n\to\infty}M_{n}$ with probability one and in $L^{2}$. The random variable $M_{\infty}$ has a positive variance.
    \end{enumerate}
\end{theorem}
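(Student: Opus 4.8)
The plan is to treat $\{M_n\}$ directly as a square-integrable martingale for the filtration $\{\mathcal{F}_n\}$, whose increments $\Delta M_k := k^{-\gamma}\bigl(X_k-E[X_k\mid\mathcal{F}_{k-1}]\bigr)$ obey $|\Delta M_k|\le 2k^{-\gamma}$. Since $X_k^2=1$, the predictable quadratic variation is
\[
  \langle M\rangle_n=\sum_{k=1}^n\frac{1-\bigl(E[X_k\mid\mathcal{F}_{k-1}]\bigr)^2}{k^{2\gamma}}=\sum_{k=1}^n\frac{1-\alpha^2\bigl(T_{k-1}/(k-1)\bigr)^2}{k^{2\gamma}},
\]
by \eqref{eq:Xnconditionalexp}. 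The first step is to show this bracket is asymptotically deterministic: because $\alpha\in[-1,1)$, the law of large numbers \eqref{eq:ERW_LLN} gives $T_{k-1}/(k-1)\to0$ a.s., so a Toeplitz argument yields $\langle M\rangle_n\sim\sum_{k=1}^n k^{-2\gamma}$ a.s. (and $\langle M\rangle_\infty-\langle M\rangle_n\sim\sum_{k>n}k^{-2\gamma}$ a.s. when the series converges). The elementary estimates $\sum_{k\le n}k^{-2\gamma}\sim n^{1-2\gamma}/(1-2\gamma)$ for $\gamma<1/2$, $\sim\log n$ for $\gamma=1/2$, and $\sum_{k>n}k^{-2\gamma}\sim n^{1-2\gamma}/(2\gamma-1)$ for $\gamma>1/2$ then supply every normalizing sequence and variance constant appearing in the statement.

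In the divergent regimes $\gamma\le1/2$ I would obtain the CLT from the martingale central limit theorem in Lindeberg form. The Lindeberg condition is automatic: $|\Delta M_k|\le2$ uniformly while $\langle M\rangle_n\to\infty$, so for $n$ large every jump with $k\le n$ satisfies $|\Delta M_k|<\varepsilon\sqrt{\langle M\rangle_n}$ and the truncated conditional sums vanish identically. Together with the a.s. equivalence of $\langle M\rangle_n$ to the deterministic $\sum_{k\le n}k^{-2\gamma}$, this gives $M_n/\sqrt{\sum_{k\le n}k^{-2\gamma}}\dlimit N(0,1)$, hence the stated limits. For the LIL I would apply Stout's martingale law of the iterated logarithm, checking the growth condition $|\Delta M_k|=o\bigl((\langle M\rangle_k/\log\log\langle M\rangle_k)^{1/2}\bigr)$, which holds since $k^{-\gamma}$ is polynomially small against $\langle M\rangle_k^{1/2}$. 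This produces $\limsup_n M_n/\sqrt{2\langle M\rangle_n\log\log\langle M\rangle_n}=1$ a.s., and substituting the a.s. equivalent of $\langle M\rangle_n$ — noting $\log\log\langle M\rangle_n\sim\log\log n$ when $\gamma<1/2$ but $\sim\log\log\log n$ when $\gamma=1/2$ — yields the constants $1/\sqrt{1-2\gamma}$ and $1$ with the correct normalizations; the $\pm$ versions follow by applying the same result to $-M_n$.

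The convergent regime $\gamma>1/2$ is where the real work lies and is the main obstacle. Here $\langle M\rangle_\infty<\infty$, so $\{M_n\}$ is $L^2$-bounded and converges a.s. and in $L^2$ to $M_\infty$ with $\variance(M_\infty)=E[\langle M\rangle_\infty]>0$, and the quantity of interest becomes the tail $M_\infty-M_n=\sum_{k>n}\Delta M_k$, whose conditional variance $V_n:=\langle M\rangle_\infty-\langle M\rangle_n\sim n^{1-2\gamma}/(2\gamma-1)$. One must prove a tail CLT and a converse (tail) LIL for this convergent martingale. The route I would pursue is a strong approximation: represent $M_n$ as $B_{\langle M\rangle_n}$ up to a negligible error for a standard Brownian motion $B$ (a martingale almost-sure invariance principle, using the bounded increments), so that $M_\infty=B_{\langle M\rangle_\infty}$ and $M_\infty-M_n=B_{\langle M\rangle_\infty}-B_{\langle M\rangle_n}+o(\cdot)$. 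The tail CLT then reduces to the Gaussianity of a Brownian increment over a window of length $V_n$, and the tail LIL reduces to the local law of the iterated logarithm at the fixed time $\langle M\rangle_\infty$, namely $\limsup_{h\downarrow0}\pm(B_{t+h}-B_t)/\sqrt{2h\log\log(1/h)}=1$ evaluated at $h=V_n$; since $\log\log(1/V_n)\sim\log\log n$, this gives exactly the constant $1/\sqrt{2\gamma-1}$. The delicate point — and the step I expect to require the most care — is controlling the approximation error uniformly down to the finite time $\langle M\rangle_\infty$, equivalently establishing the converse LIL directly through Freedman-type exponential inequalities along a geometric sequence of scales $V_n$; the routine $\gamma\le1/2$ arguments are comparatively harmless.
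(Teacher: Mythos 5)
Your treatment of the divergent regimes $\gamma\le 1/2$ is sound and is essentially the paper's argument in different clothing: the paper also first proves that the conditional and unconditional quadratic variations are asymptotically equivalent to the deterministic sum $\sum_{k\le n}k^{-2\gamma}$ (its Lemma~\ref{lemma:snVnUn}, via a Kronecker-lemma argument on an auxiliary martingale built from $(d_k)^2-E[(d_k)^2\mid\mathcal{F}_{k-1}]$), and then feeds this into a packaged martingale CLT/LIL — specifically Theorem~1 of Heyde (1977) rather than the Lindeberg CLT plus Stout's LIL, but the verifications (bounded increments make the Lindeberg/truncation sums trivial, fourth-moment series converges) are the same. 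Two small points: the $k=1$ term of your bracket is $1-\beta^2$, not $1-\alpha^2(T_0/0)^2$ (the first step has conditional mean $\beta$), and Stout's LIL is stated with the conditional variance $\langle M\rangle_n$ in the normalizer, so you do need the a.s.\ deterministic equivalence you established — but you have it.

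The convergent case $\gamma>1/2$ is where your route genuinely diverges from the paper's and where the gap lies. The paper gets the tail CLT and tail LIL for free from the same Heyde (1977) theorem, which is expressly a ``CLT and LIL supplement to the martingale convergence theorem'': one applies it to the tail quantities $\hat s_n^2$, $\hat V_n^2$, $\hat U_n^2$, whose asymptotics are supplied by Lemma~\ref{lemma:snVnUn}~(ii). Your proposed substitute — Skorokhod embedding $M_n=B_{\tau_n}$ and the local LIL of Brownian motion over the window $[\tau_n,\tau_\infty]$ — does not close as sketched. The local LIL $\limsup_{h\downarrow 0}\pm(B_{t+h}-B_t)/\sqrt{2h\log\log(1/h)}=1$ holds for each \emph{fixed} $t$ almost surely, but the exceptional null set depends on $t$, and Brownian motion a.s.\ possesses fast and slow points where the statement fails; $\tau_\infty$ is a random time determined by the path, so you cannot evaluate the local LIL at $h=\tau_\infty-\tau_n$ without an argument that $\tau_\infty$ avoids the exceptional set. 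You would additionally need $\tau_\infty-\tau_n\sim \hat V_n^2$ a.s.\ at the scale $n^{1-2\gamma}$, which is another tail strong law to prove. Your fallback — Freedman-type exponential bounds along geometric scales of $\hat V_n^2$, applied to the reversed tail sums — is the right repair and is in effect what underlies Heyde's theorem, but as written it is a placeholder for the hardest step rather than a proof. Either carry out that blocking argument explicitly or invoke a tail CLT/LIL theorem for $L^2$-bounded martingales as the paper does.
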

The rest of this subsection is devoted to the proof of Theorem \ref{thm:Mn}. Let
\begin{equation}
    d_{k}\coloneq M_{k}-M_{k-1}=\frac{X_{k}-E[X_{k}\mid \mathcal{F}_{k-1}]}{k^{\gamma}} \quad \text{for \(k=1,2,\dots\),}\label{eq:martingaledifferences}
\end{equation}
where \(M_{0}\coloneq 0\). Note that \(\displaystyle |d_{k}|\le 2k^{-\gamma}\) since \(|X_{k}|=1\).
\begin{lemma}\label{lemma:hatMnmartingale}
    The sequence \(\{M_n\}\) is a square-integrable martingale with mean \(0\).
\end{lemma}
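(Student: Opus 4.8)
The plan is to verify the three defining properties of a square-integrable martingale directly from the Doob decomposition \eqref{eq:hatSn_decomposition}, since $\{M_n\}$ is constructed precisely so that its increments $d_k$ from \eqref{eq:martingaledifferences} are martingale differences. No deep input is needed; the work is bookkeeping.

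First I would check adaptedness and integrability. For each $k\le n$, the term $E[X_k\mid\mathcal{F}_{k-1}]$ is $\mathcal{F}_{k-1}$-measurable, hence $\mathcal{F}_n$-measurable, and $X_k$ is $\mathcal{F}_n$-measurable; therefore $M_n=\sum_{k=1}^{n}d_k$ is $\mathcal{F}_n$-measurable, so $\{M_n\}$ is adapted. For integrability I would invoke the explicit bound $|d_k|\le 2k^{-\gamma}$ recorded after \eqref{eq:martingaledifferences}, which gives $|M_n|\le 2\sum_{k=1}^{n}k^{-\gamma}$, a finite deterministic constant for each fixed $n$. In particular $M_n\in L^\infty\subset L^2$, which is what establishes square-integrability (the crucial point being that the bound is summable-free for finite $n$ and does not require $\gamma>1/2$).

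Next I would verify the martingale identity $E[M_{n+1}\mid\mathcal{F}_n]=M_n$, which reduces to $E[d_{n+1}\mid\mathcal{F}_n]=0$. Since $E[X_{n+1}\mid\mathcal{F}_n]$ is itself $\mathcal{F}_n$-measurable, taking conditional expectation in the definition of $d_{n+1}$ yields
\[
    E[d_{n+1}\mid\mathcal{F}_n]=\frac{1}{(n+1)^{\gamma}}\bigl(E[X_{n+1}\mid\mathcal{F}_n]-E[X_{n+1}\mid\mathcal{F}_n]\bigr)=0 ,
\]
using the explicit form of $E[X_{n+1}\mid\mathcal{F}_n]$ from \eqref{eq:Xnconditionalexp}. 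The mean-zero property then follows from $M_0=0$ and the tower property: $E[M_n]=E[M_{n-1}]+E\bigl[E[d_n\mid\mathcal{F}_{n-1}]\bigr]=E[M_{n-1}]$, so by induction $E[M_n]=E[M_0]=0$.

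I do not expect any genuine obstacle in this lemma: it is a routine consequence of the Doob decomposition, and the only substantive remark is that the pointwise bound $|d_k|\le 2k^{-\gamma}$ upgrades mere integrability to square-integrability. The real content of this subsection lies downstream, in controlling the predictable quadratic variation $\sum_k E[d_k^2\mid\mathcal{F}_{k-1}]$ and its asymptotics, which is what will drive the CLT and LIL of Theorem \ref{thm:Mn}.
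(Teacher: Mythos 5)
Your proof is correct and follows essentially the same route as the paper: both establish $E[d_{k}\mid\mathcal{F}_{k-1}]=0$ from the $\mathcal{F}_{k-1}$-measurability of $E[X_k\mid\mathcal{F}_{k-1}]$, and both deduce square-integrability from the uniform bound $|X_k|=1$ (the paper via $E[M_n^2]=\sum_{k=1}^n E[(d_k)^2]<\infty$, you via the equivalent pointwise bound $|d_k|\le 2k^{-\gamma}$). No substantive difference.
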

\begin{proof}
    By the definition of \(d_k\) by \eqref{eq:martingaledifferences}, we have \(E[d_{k}\mid \mathcal{F}_{k-1}]=0\) for \(k=1,2,\dots\). Moreover,
    \begin{align}
        E[(d_{k})^{2}\mid \mathcal{F}_{k-1}]&=\frac{E[(X_{k}-E[X_{k}\mid \mathcal{F}_{k-1}])^{2} \mid \mathcal{F}_{k-1} ]}{k^{2\gamma}} \nonumber\\
        &= \frac{E[(X_{k})^{2}\mid \mathcal{F}_{k-1}] - (E[X_{k}\mid \mathcal{F}_{k-1}])^{2}}{k^{2\gamma}}= \frac{1 - (E[X_{k}\mid \mathcal{F}_{k-1}])^{2}}{k^{2\gamma}}.\nonumber
    \end{align}
    Since \(|X_{k}|=1\), we have \(\displaystyle E[M_{n}^{2}]=\sum_{k=1}^{n}E[(d_{k})^{2}]<+\infty\) for each \(n\).
\end{proof}
For \(n=1,2,\dots\), let
\begin{equation}\label{eq:snVnUn_definition}
    s_{n}^{2}\coloneq \sum_{k=1}^{n} E[(d_{k})^{2}],\quad V_{n}^{2}\coloneq \sum_{k=1}^{n} E[(d_{k})^{2}\mid \mathcal{F}_{k-1}],\quad U_{n}^{2}\coloneq \sum_{k=1}^{n} (d_{k})^{2},\nonumber
\end{equation}
and
\begin{center}
    $\displaystyle s_{\infty}^{2}\coloneq \lim_{n\to\infty}s_{n}^{2}$, $\displaystyle V_{\infty}^{2}\coloneq \lim_{n\to\infty}V_{n}^{2}$ a.s. and $\displaystyle U_{\infty}^{2}\coloneq \lim_{n\to\infty}U_{n}^{2}$ a.s.
\end{center}
whenever these limits exist.
\begin{lemma}\label{lemma:snVnUn}
    Suppose that $\alpha\in [-1,1)$.
    \begin{enumerate}
        \item If $\gamma\le 1/2$, then $ V_{n}^{2}\sim s_{n}^{2}\sim \sum_{k=1}^{n} k^{-2\gamma}$ and $U_{n}^{2}-V_{n}^{2}=o(s_{n}^{2})$ as $n\to \infty$ almost surely.\label{lemma:snVnUn_gamma_sub}
        \item If $\gamma>1/2$, then $\{U_{n}^{2}\}$, $\{V_{n}^{2}\}$ and $\{s_{n}^{2}\}$ converge almost surely. Moreover, we have $ \hat{V}_{n}^{2}\sim \hat{s}_{n}^{2}\sim \sum_{k=n}^{\infty} k^{-2\gamma}$ and $\hat{U}_{n}^{2}-\hat{V}_{n}^{2}=o(\hat{s}_{n}^{2})$ as $n\to \infty$ almost surely, where $\hat{s}_{n}^{2}\coloneq s_{\infty}^{2}-s_{n}^{2}$, $\hat{V}_{n}^{2}\coloneq V_{\infty}^{2}-V_{n}^{2}$ and $\hat{U}_{n}^{2}\coloneq U_{\infty}^{2}-U_{n}^{2}$.\label{lemma:snVnUn_gamma_super}
    \end{enumerate}
\end{lemma}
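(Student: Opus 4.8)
The plan is to reduce both parts to a single computation. From \eqref{eq:Xnconditionalexp} and the conditional variance found in the proof of Lemma~\ref{lemma:hatMnmartingale}, I have $E[(d_k)^2\mid\mathcal{F}_{k-1}]=k^{-2\gamma}\bigl(1-\alpha^2(T_{k-1}/(k-1))^2\bigr)$ and, taking expectations, $E[(d_k)^2]=k^{-2\gamma}\bigl(1-\alpha^2 E[(T_{k-1}/(k-1))^2]\bigr)$. The decisive input is that the two correction factors tend to $1$: since $\alpha<1$, \eqref{eq:ERW_LLN} gives $T_n/n\to0$ a.s., and because $|T_n/n|\le1$, bounded convergence gives $E[(T_n/n)^2]\to0$. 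Writing $c_k\coloneq k^{2\gamma}E[(d_k)^2\mid\mathcal{F}_{k-1}]$ and $\bar c_k\coloneq k^{2\gamma}E[(d_k)^2]$, I thus have $c_k\to1$ a.s.\ and $\bar c_k\to1$.

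For part~\enumref{lemma:snVnUn_gamma_sub}, where $\gamma\le1/2$ and $\sum_k k^{-2\gamma}=\infty$, I would apply the Toeplitz (Ces\`{a}ro) lemma to $V_n^2=\sum_{k\le n}k^{-2\gamma}c_k$ pathwise and to the deterministic sum $s_n^2=\sum_{k\le n}k^{-2\gamma}\bar c_k$; this yields $V_n^2\sim s_n^2\sim\sum_{k\le n}k^{-2\gamma}$ at once. For the last claim set $e_k\coloneq(d_k)^2-E[(d_k)^2\mid\mathcal{F}_{k-1}]$, so $U_n^2-V_n^2=\sum_{k\le n}e_k\eqcolon W_n$ is a mean-zero martingale with $E[(e_k)^2\mid\mathcal{F}_{k-1}]\le E[(d_k)^4\mid\mathcal{F}_{k-1}]\le16k^{-4\gamma}$. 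Using $s_k^2\sim k^{1-2\gamma}/(1-2\gamma)$ (resp.\ $s_k^2\sim\log k$ when $\gamma=1/2$) one checks $\sum_k E[(e_k)^2\mid\mathcal{F}_{k-1}]/(s_k^2)^2<\infty$, being of order $\sum_k k^{-2}$ in both cases. The martingale strong law then follows: the martingale $\sum_{k\le n}e_k/s_k^2$ has summable conditional variances, hence converges a.s., and Kronecker's lemma with $s_n^2\uparrow\infty$ gives $W_n/s_n^2\to0$, i.e.\ $U_n^2-V_n^2=o(s_n^2)$ a.s.

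For part~\enumref{lemma:snVnUn_gamma_super}, where $\gamma>1/2$ and $\sum_k k^{-2\gamma}<\infty$, the sequences $s_n^2,V_n^2,U_n^2$ are nondecreasing and bounded above by $4\sum_k k^{-2\gamma}$, so they converge (a.s.). For the tail asymptotics I would invoke the tail form of the Toeplitz lemma: since $c_k\to1$ a.s.\ and $\bar c_k\to1$, the tails satisfy $\hat V_n^2=\sum_{k>n}k^{-2\gamma}c_k\sim\sum_{k>n}k^{-2\gamma}$ a.s.\ and $\hat s_n^2=\sum_{k>n}k^{-2\gamma}\bar c_k\sim\sum_{k>n}k^{-2\gamma}$, and $\sum_{k>n}k^{-2\gamma}\sim\sum_{k\ge n}k^{-2\gamma}\sim n^{1-2\gamma}/(2\gamma-1)$.

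The main obstacle is the remaining assertion $\hat U_n^2-\hat V_n^2=W_\infty-W_n=\sum_{k>n}e_k\eqcolon R_n=o(\hat s_n^2)$ a.s., where $W_n$ is the martingale above, now $L^2$-bounded since $\sum_k E[(e_k)^2]<\infty$. A second-moment bound gives $E[R_n^2]=\sum_{k>n}E[(e_k)^2]\le Cn^{1-4\gamma}$, so $E[R_n^2]/(\hat s_n^2)^2=O(n^{-1})$; this only delivers $R_n/\hat s_n^2\to0$ in probability, and upgrading to almost-sure convergence is the delicate step. I would do so by dyadic blocking: along $n_j=2^j$, Chebyshev gives $P(|R_{n_j}|>\varepsilon\hat s_{n_j}^2)=O(2^{-j})$, so Borel--Cantelli yields $R_{n_j}/\hat s_{n_j}^2\to0$ a.s.; for $n_j\le n<n_{j+1}$ I write $R_n=R_{n_j}-\sum_{n_j<k\le n}e_k$ and bound the block maximum by Doob's inequality, $E[\max_{n_j\le m<n_{j+1}}(\sum_{n_j<k\le m}e_k)^2]\le4\sum_{k>n_j}E[(e_k)^2]=O(n_j^{1-4\gamma})$, which again gives an $O(2^{-j})$ tail probability after division by $(\hat s_{n_j}^2)^2$. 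Because $\hat s_n^2\asymp\hat s_{n_j}^2$ on each block (the ratio $\hat s_{n_{j+1}}^2/\hat s_{n_j}^2\to2^{1-2\gamma}$), Borel--Cantelli then closes the gaps and $R_n/\hat s_n^2\to0$ a.s., completing part~\enumref{lemma:snVnUn_gamma_super}.
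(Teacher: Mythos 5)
Your proposal is correct. For part \enumref{lemma:snVnUn_gamma_sub} it coincides with the paper's argument: the same identification of the correction factors $1-(E[X_k\mid\mathcal{F}_{k-1}])^2\to 1$ (a.s.\ via \eqref{eq:ERW_LLN}, in mean via \eqref{eq:T_n_second_moment} or, as you do, bounded convergence), the same Toeplitz step for $V_n^2\sim s_n^2\sim\sum_{k\le n}k^{-2\gamma}$, and the same treatment of $U_n^2-V_n^2$ by showing that the martingale $\sum_k e_k/s_k^2$ is $L^2$-bounded (the bound $E[(e_k)^2\mid\mathcal{F}_{k-1}]\le E[(d_k)^4\mid\mathcal{F}_{k-1}]\le 16k^{-4\gamma}$ is exactly the paper's \eqref{eq:sn^4_conditionalEXP}) and then applying Kronecker's lemma. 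Where you genuinely diverge is part \enumref{lemma:snVnUn_gamma_super}: the paper disposes of it in one sentence (``the same way as (i)''), which implicitly rests on a tail analogue of Kronecker's lemma --- if $b_n\downarrow 0$ and $\sum_k a_k/b_k$ converges, then $b_n^{-1}\sum_{k\ge n}a_k\to 0$ --- applied with $b_n=\hat s_n^2$, using that $\sum_k E[(e_k)^2]/(\hat s_k^2)^2=O(\sum_k k^{-2})<\infty$. You instead prove $\sum_{k>n}e_k=o(\hat s_n^2)$ directly by dyadic blocking, Chebyshev, Doob's maximal inequality and Borel--Cantelli. Both routes are valid and use the same moment input; yours is more self-contained and makes explicit a step the paper leaves to the reader, at the cost of a longer argument, while the tail-Kronecker route keeps parts (i) and (ii) formally parallel.
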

\begin{proof}
    \ref{lemma:snVnUn_gamma_sub} Suppose that \(\gamma \le 1/2\). By \eqref{eq:ERW_LLN} and \eqref{eq:Xnconditionalexp}, we have
    \begin{equation}\label{eq:conditionalexpectation_asymptotic}
        E[(d_{k})^{2}\mid \mathcal{F}_{k-1}]=\frac{1-(E[X_{k}\mid \mathcal{F}_{k-1}])^{2}}{k^{2\gamma}} \sim \frac{1}{k^{2\gamma}} \quad \text{as $k\to \infty$}\nonumber
    \end{equation}
    with probability one. Moreover, by \eqref{eq:T_n_second_moment} and \eqref{eq:Xnconditionalexp}, we obtain
    \begin{equation}
        E[(d_{k})^{2}]=\frac{1-E[(E[X_{k}\mid \mathcal{F}_{k-1}])^{2}]}{k^{2\gamma}}\sim \frac{1}{k^{2\gamma}}\quad \text{as $k\to \infty$}.\nonumber
    \end{equation}
    Thus, since $E[(d_{k})^{2}\mid \mathcal{F}_{k-1}]\sim E[(d_{k})^{2}]$ as $k\to\infty$, we have, with probability one,
    \begin{equation}\label{eq:s_n^2_asymptotic}
        V_{n}^{2}\sim s_{n}^{2}\sim \sum_{k=1}^{n}\frac{1}{k^{2\gamma}}\quad \text{as $n\to\infty$.}
    \end{equation}
    To prove $U_{n}^{2}-V_{n}^{2}=o(s_{n}^{2})$ a.s., by Kronecker's lemma, it suffices to show
    \begin{equation}\label{eq:sum_convergence}
        \sum_{k=1}^{\infty}\frac{1}{s_{k}^{2}}\{(d_{k})^{2}-E[(d_{k})^{2}\mid \mathcal{F}_{k-1}]\}\quad \text{converges a.s.}
    \end{equation}
    Letting
    \begin{equation}
        \widehat{d}_{n}\coloneq \frac{1}{s_{n}^{2}}\{(d_{n})^{2}-E[(d_{n})^{2}\mid \mathcal{F}_{n-1}]\},\quad  m_{n}\coloneq \sum_{k=1}^{n}\widehat{d}_{k}\quad \text{and}\quad m_{0}\coloneq 0,\nonumber
    \end{equation}
    $\{m_{n}\}$ is a martingale with mean zero. We now show that $\{m_{n}\}$ is $L^{2}$-bounded, i.e.
    \begin{equation}\label{eq:martingale2_sum}
        \sup_{n\ge 1}E[m_{n}^{2}]=\sum_{k=1}^{\infty}E[(\widehat{d}_{k})^{2}]<+\infty,
    \end{equation}
    which together with Doob's convergence theorem (Corollary 2.2 in \cite{HallHeyde11980Martingale}) yields \eqref{eq:sum_convergence}. Since
    \begin{align}
        E[(\widehat{d}_{n})^{2}\mid \mathcal{F}_{n-1}]&=E\left[\frac{1}{s_{n}^{4}}\{(d_{n})^{2}-E[(d_{n})^{2}\mid \mathcal{F}_{n-1}]\}^{2}\mid \mathcal{F}_{n-1}\right]\nonumber\\
        &=\frac{1}{s_{n}^{4}}\{E[(d_{n})^{4}\mid \mathcal{F}_{n-1}]-(E[(d_{n})^{2}\mid \mathcal{F}_{n-1}])^{2}\}\nonumber\\
        &\le \frac{1}{s_{n}^{4}}E[(d_{n})^{4}\mid \mathcal{F}_{n-1}]\quad \text{a.s.},\nonumber
    \end{align}
    we have $E[(\widehat{d}_{n})^{2}]\le s_{n}^{-4}E[(d_{n})^{4}]$. By \eqref{eq:s_n^2_asymptotic} and $\displaystyle |d_{k}|\le 2k^{-\gamma}$,
\begin{equation}\label{eq:sn^4_conditionalEXP}
    \frac{1}{s_{n}^{4}}E[(d_{n})^{4}]\le \frac{1}{s_{n}^{4}}\cdot\frac{16}{n^{4\gamma}}\sim
    \begin{dcases*}
        \frac{16(1-2\gamma)^{2}}{n^{2}} &  if $\gamma<1/2$,\\
        \frac{16}{(n\log n)^{2}} & if $\gamma=1/2$,
    \end{dcases*}
\end{equation}
as $n\to\infty$. Thus, we have
\begin{equation}
    \sum_{n=1}^{\infty}\frac{1}{s_{n}^{4}}E[(d_{n})^{4}]<+\infty,\label{eq:sn^4_expectation}
\end{equation}
which implies \eqref{eq:martingale2_sum}.

\ref{lemma:snVnUn_gamma_super} By considering $\{\hat{s}_{n}^{2}\}$, $\{\hat{V}_{n}^{2}\}$ and $\{\hat{U}_{n}^{2}\}$, instead of $\{{s}_{n}^{2}\}$, $\{{V}_{n}^{2}\}$ and $\{{U}_{n}^{2}\}$, respectively, we can give the proof of \ref{lemma:snVnUn_gamma_super} in the same way as \ref{lemma:snVnUn_gamma_sub}.
\end{proof}

\begin{proof}[Proof of Theorem \ref{thm:Mn}]
    We check the conditions of Theorem 1 in \cite{Heyde1977MartingaleLimitTheorem}. Suppose that $\gamma\le 1/2$. In that case, $s_{n}^{2}\to \infty$ as $n\to\infty$. By Lemma \ref{lemma:snVnUn} \ref{lemma:snVnUn_gamma_sub}, we have $\displaystyle s_{n}^{-2}U_{n}^{2}\to 1$ as $n\to\infty$ a.s. Since $\displaystyle (d_{k})^{2}\le 4k^{-2\gamma}$, 
    \begin{center}
        $\displaystyle s_{n}^{-2}E\!\left[\sup_{1\le k \le n} (d_{k})^{2}\right]\!\to 0$ as $n\to \infty$ with probability one.
    \end{center}
Thus, 
    \begin{center}
        $\displaystyle \frac{M_{n}}{n^{1/2-\gamma}}\dlimit N\!\left(0,\frac{1}{1-2\gamma}\right)$ if $\gamma<1/2$, and $\displaystyle \frac{M_{n}}{\sqrt{\log n}}\dlimit N\!\left(0,1\right)$ if $\gamma=1/2$.
    \end{center}
    For any $\varepsilon>0$, as
    \begin{equation}
        \frac{1}{s_{k}}E[|d_{k}|\colon |d_{k}|>\varepsilon s_{k}]\le \frac{1}{\varepsilon^{3}s_{n}^{4}}E[(d_{k})^{4}],\nonumber
    \end{equation}
    we obtain, by \eqref{eq:sn^4_expectation},
    \begin{center}
        $\displaystyle \sum_{k=1}^{\infty} \frac{1}{s_{k}}E[|d_{k}|\colon |d_{k}|>\varepsilon s_{n}]<\infty$.\nonumber
    \end{center}
    Thus, writing $\phi(t)\coloneq (2t\log\log (t\vee 3))^{1/2}$, we have
    \begin{equation}
        \limsup_{n\to\infty} \pm\frac{M_{n}}{\phi(U_{n})}=1 \quad \text{a.s.},\nonumber
    \end{equation}
    which implies the law of the iterated logarithm for $\{M_{n}\}$ in the case $\gamma\le 1/2$.

    Suppose that $\gamma>1/2$. By Lemma \ref{lemma:snVnUn} \ref{lemma:snVnUn_gamma_super} and Doob's convergence theorem,
    \begin{equation}
        M_{\infty}\coloneq \sum_{k=1}^{\infty}d_{k}=\lim_{n\to\infty}M_{n}\label{eq:Mn_limit}
    \end{equation}
    exists with probability one and in $L^{2}$, where
    \begin{equation}
        E[M_{\infty}]=0,\quad E[(M_{\infty})^{2}]=\sum_{k=1}^{\infty}E[(d_{k})^{2}]>0.\nonumber
    \end{equation}
    The conditions of Theorem 1 in \cite{Heyde1977MartingaleLimitTheorem} hold for $\{\hat{s}_{n}^{2}\}$, $\{\hat{V}_{n}^{2}\}$ and $\{\hat{U}_{n}^{2}\}$. Thus, we have Theorem \ref{thm:Mn} \ref{thm:Mn_super}.
\end{proof}
\subsection{An expression of $\{A_n\}$ in terms of $\{S_n\}$ and $\{T_n\}$}\label{subsec:An}
The following lemma together with limit theorems for $\{M_{n}\}$ and $\{T_{n}\}$ yields limit theorems for $\{S_{n}\}$.
\begin{lemma}\label{lemma:An}
    There is a sequence of random variable $\{R_{n}\}$ such that
    \begin{equation}
        A_{n}=\frac{\alpha}{\gamma}\left(S_{n}-\frac{T_{n}}{n^{\gamma}}\right)+R_{n}\label{eq:relation_Sn_Tn_Mn}
    \end{equation}
    with $|R_{n}|\le K$ a.s. for some positive constant $K=K(\alpha,\beta,\gamma)$.
\end{lemma}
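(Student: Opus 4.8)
The plan is to put both $A_n$ and $S_n-T_n/n^\gamma$ into the form of a single weighted series in the partial sums $\{T_k\}$, so that $R_n$ is exhibited as a manifestly bounded series. First I would use \eqref{eq:Xnconditionalexp}, which gives $E[X_k\mid\mathcal F_{k-1}]=\alpha T_{k-1}/(k-1)$ for $k\ge 2$ and $E[X_1\mid\mathcal F_0]=\beta$; substituting into $A_n=\sum_{k=1}^n k^{-\gamma}E[X_k\mid\mathcal F_{k-1}]$ and re-indexing $j=k-1$ yields
\[
A_n=\beta+\alpha\sum_{k=1}^{n-1}\frac{T_k}{k(k+1)^\gamma}.
\]

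For the remaining term I would apply summation by parts to $S_n=\sum_{k=1}^n k^{-\gamma}(T_k-T_{k-1})$; using $T_0=0$ this produces
\[
S_n-\frac{T_n}{n^\gamma}=\sum_{k=1}^{n-1}T_k\!\left(\frac{1}{k^\gamma}-\frac{1}{(k+1)^\gamma}\right).
\]
Combining the two identities, $R_n=A_n-\frac{\alpha}{\gamma}\bigl(S_n-T_n/n^\gamma\bigr)=\beta+\alpha\sum_{k=1}^{n-1}T_k\,b_k$ with $b_k:=\frac{1}{k(k+1)^\gamma}-\frac{1}{\gamma}\bigl(k^{-\gamma}-(k+1)^{-\gamma}\bigr)$. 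It then suffices to show $\sum_k T_k b_k$ converges absolutely, for then $K:=|\beta|+|\alpha|\sum_{k\ge 1}|T_k b_k|$ serves as the required constant; note this bound is deterministic and so holds surely, using only $|T_k|\le k$.

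The heart of the matter is the size of $b_k$. Expanding $(k+1)^{-\gamma}=k^{-\gamma}(1+1/k)^{-\gamma}$ to second order gives $k^{-\gamma}-(k+1)^{-\gamma}=\gamma k^{-\gamma-1}-\tfrac{\gamma(\gamma+1)}{2}k^{-\gamma-2}+O(k^{-\gamma-3})$ and $\frac{1}{k(k+1)^\gamma}=k^{-\gamma-1}-\gamma k^{-\gamma-2}+O(k^{-\gamma-3})$. Dividing by $\gamma$ in the definition of $R_n$ is exactly what makes the two $k^{-\gamma-1}$ terms cancel, leaving $b_k=\tfrac{1-\gamma}{2}k^{-\gamma-2}+O(k^{-\gamma-3})=O(k^{-\gamma-2})$. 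Feeding in $|T_k|\le k$ then gives $|T_k b_k|\le C k^{-\gamma-1}$, and $\sum_k k^{-\gamma-1}<\infty$ for every $\gamma>0$, which closes the estimate.

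The main obstacle is precisely this leading-order cancellation: the constant $\alpha/\gamma$ is forced rather than cosmetic, since otherwise $b_k$ would be of order $k^{-\gamma-1}$ and $|T_k b_k|$ of order $k^{-\gamma}$, which fails to be summable for $\gamma\le 1$ --- the entire range of interest. I would therefore make the two expansions rigorous with uniform remainder control (for instance via the mean value theorem for $t\mapsto t^{-\gamma}$ on $[k,k+1]$ supplemented by a second-order estimate), since it is the uniformity of the implied constants over all $k\ge 1$ that delivers a single constant $K=K(\alpha,\beta,\gamma)$ as claimed.
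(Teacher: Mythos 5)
Your proof is correct. The first step (the identity $A_n=\beta+\alpha\sum_{k=1}^{n-1}T_k/(k(k+1)^{\gamma})$) coincides with the paper's, but from there you run the summation by parts in the opposite direction from the paper: you convert $S_n-T_n/n^{\gamma}$ into a weighted sum $\sum_{k=1}^{n-1}T_k\bigl(k^{-\gamma}-(k+1)^{-\gamma}\bigr)$ of the partial sums $T_k$, whereas the paper converts $\sum_k T_k/k^{\gamma+1}$ back into a weighted sum $\sum_l X_l\sigma_l$ of the increments $X_l$ and compares the tail sums $\sigma_l$ with the integrals $J_l=1/(\gamma l^{\gamma})$. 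The trade-off is visible in the order of approximation each route needs: the paper multiplies its errors by $|X_l|=1$, so the first-order bound $0\le\sigma_l-J_l\le l^{-\gamma-1}$ suffices, while you multiply your errors by $|T_k|\le k$, so you genuinely need the second-order cancellation $b_k=\frac{1}{k(k+1)^{\gamma}}-\frac{1}{\gamma}\bigl(k^{-\gamma}-(k+1)^{-\gamma}\bigr)=O(k^{-\gamma-2})$, which you correctly identify and compute (both weights equal $k^{-\gamma-1}+O(k^{-\gamma-2})$, and the $1/\gamma$ is exactly what makes the leading terms match). Your closing remark about making the expansion rigorous is easily discharged: a mean value estimate gives $|b_k|\le\frac{3\gamma+1}{2}k^{-\gamma-2}$ for all $k\ge 1$, whence $\sum_k k|b_k|<\infty$ for every $\gamma>0$ and $K=|\beta|+|\alpha|\sum_{k\ge1}k|b_k|$ is a deterministic constant depending only on $\alpha,\beta,\gamma$, as required. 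One small wording slip: the formula $K:=|\beta|+|\alpha|\sum_{k\ge1}|T_kb_k|$ as written is a random quantity; you clearly intend $K:=|\beta|+|\alpha|\sum_{k\ge1}k|b_k|$, consistent with your remark that the bound uses only $|T_k|\le k$.
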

\begin{proof}
    By \eqref{eq:Xnconditionalexp} and \eqref{eq:hatSn_decomposition}, we obtain
    \begin{align}\label{eq:A_n}
        A_{n}&=\sum_{k=1}^{n}\frac{E[X_{k}\mid \mathcal{F}_{k-1}]}{k^{\gamma}}=\beta+\alpha\sum_{k=1}^{n-1}\frac{T_{k}}{k(k+1)^{\gamma}}.
    \end{align}
    Since $|T_{k}|\le k$ a.s., we have, with probability one,
    \begin{align}
        \left|\alpha\sum_{k=1}^{n-1}\frac{T_{k}}{k(k+1)^{\gamma}}-\alpha\sum_{k=1}^{n}\frac{T_{k}}{k^{\gamma+1}}\right|&\le |\alpha|\cdot \left|\sum_{k=1}^{n-1}\frac{T_{k}}{k}\left(\frac{1}{(k+1)^{\gamma}}-\frac{1}{k^{\gamma}}\right)\right|+|\alpha|\cdot \frac{|T_{n}|}{n^{\gamma+1}}\nonumber\\
        &\le|\alpha|\sum_{k=1}^{n-1}\frac{|T_{k}|}{k} \left(\frac{1}{k^{\gamma}}-\frac{1}{(k+1)^{\gamma}}\right)+|\alpha|\le 2|\alpha|. \label{eq:mean}
    \end{align}
    Let
    \begin{equation}
        {\sigma}_{l}\coloneq \sum_{k=l}^{\infty}\frac{1}{k^{\gamma+1}}\quad \text{and}\quad {J}_{l}\coloneq \int_{l}^{\infty}\frac{dx}{x^{\gamma+1}}=\frac{1}{\gamma l^{\gamma}}.\nonumber
    \end{equation}
    Rearranging the sum, we have
    \begin{align}
        \sum_{k=1}^{n}\frac{T_{k}}{k^{\gamma+1}}&=\sum_{k=1}^{n}\sum_{l=1}^{k}\frac{X_{l}}{k^{\gamma+1}}=\sum_{l=1}^{n}X_{l}\sum_{k=l}^{n}\frac{1}{k^{\gamma+1}}=\sum_{l=1}^{n}X_{l}\sigma_{l}- T_{n}\cdot\sigma_{n+1}\nonumber\\
        &=\frac{1}{\gamma}S_{n}-\frac{T_{n}}{\gamma n^{\gamma}}+\sum_{l=1}^{n}X_{l}(\sigma_{l}-J_{l})+T_{n}(J_{n}-\sigma_{n+1}).\nonumber \label{eq:rearrangesum}
    \end{align}
    Since $\displaystyle \sigma_{l}\ge J_{l}\ge \sigma_{l+1}$, we obtain
    \begin{equation}
        0\le\sigma_{l}-J_{l}\le\frac{1}{l^{\gamma+1}}\quad \text{and}\quad 0\le J_{l}-\sigma_{l+1}\le \frac{1}{l^{\gamma+1}}.\nonumber
    \end{equation}
    Thus, we have
    \begin{equation}
        \left|\sum_{l=1}^{n}X_{l}(\sigma_{l}-J_{l})\right|\le \sum_{l=1}^{n}\left|X_{l}(\sigma_{l}-J_{l})\right|\le \sum_{l=1}^{\infty}\frac{1}{l^{\gamma+1}}=\sigma_{1},\nonumber
    \end{equation}
    and
    \begin{equation}
        |T_{n}(J_{n}-\sigma_{n+1})|\le \frac{|T_{n}|}{n^{\gamma+1}}\le 1.\nonumber
    \end{equation}
    Therefore, letting
    \begin{equation}
        R_{n}\coloneq \beta + \alpha\!\left(\sum_{k=1}^{n-1}\frac{T_{k}}{k(k+1)^{\gamma}}-\sum_{k=1}^{n}\frac{T_{k}}{k^{\gamma+1}} + \sum_{l=1}^{n}X_{l}(\sigma_{l}-J_{l})+T_{n}(J_{n}-\sigma_{n+1})\right)\!,\nonumber
    \end{equation}
    we obtain \eqref{eq:relation_Sn_Tn_Mn}, where $|R_{n}|\le |\beta|+ (3+\sigma_1)|\alpha|$ almost surely.
\end{proof}
\subsection{Proof of Theorem \ref{thm:Limittheorems}}\label{subsec:ProofofThm2}
By \eqref{eq:hatSn_decomposition} and Lemma \ref{lemma:An}, we obtain
\begin{equation}
    \left|\left(1-\frac{\alpha}{\gamma}\right)S_{n} - \left(M_{n}-\frac{\alpha T_{n}}{\gamma n^{\gamma}}\right)\right|\le K \quad \text{a.s.}\label{eq:Snineq}
\end{equation}
Throughout this subsection, we assume that $\gamma\neq \alpha$.

\ref{enum:Thm2_alphasub}
Suppose that $\alpha\in[-1,1/2)$. Generally, for real sequences $\{x_n\}$ and $\{y_n\}$, we have
\begin{equation}
    \limsup_{n\to\infty} x_{n}+\liminf_{n\to\infty} y_{n}\le \limsup_{n\to\infty} (x_{n}+y_{n})\le \limsup_{n\to\infty} x_{n} +\limsup_{n\to\infty} y_{n}\nonumber
\end{equation}
whenever LHS and RHS of the inequality are well-defined. Using the above inequality, if $\gamma\in (0,1/2)$, then we have
\begin{gather}
    \limsup_{n\to\infty} \pm \frac{M_{n}-\alpha T_{n}/(\gamma n^{\gamma})}{\sqrt{2n^{1-2\gamma}\log\log n}}\ge \left|\frac{1}{\sqrt{1-2\gamma}}-\frac{\alpha}{\gamma\sqrt{1-2\alpha}}\right| \quad \text{a.s.},\nonumber
\end{gather}
which is positive unless $\gamma=-\alpha/(1-2\alpha)$, and
\begin{gather}
    \limsup_{n\to\infty} \pm \frac{M_{n}-\alpha T_{n}/(\gamma n^{\gamma})}{\sqrt{2n^{1-2\gamma}\log\log n}}\le \frac{1}{\sqrt{1-2\gamma}}+\frac{\alpha}{\gamma\sqrt{1-2\alpha}} \quad \text{a.s.},\nonumber
\end{gather}
by \eqref{eq:deffisive_LIL} and Theorem \ref{thm:Mn} \ref{thm:Mn_sub}. 

\noindent\ref{enum:Thm2_alpahsub_gammasub} If $\gamma\in (0,\alpha)$, then, with probability one,
\begin{equation}
    \frac{\alpha}{\gamma\sqrt{1-2\alpha}}-\frac{1}{\sqrt{1-2\gamma}}\le \frac{\alpha-\gamma}{\gamma}\limsup_{n\to\infty} \frac{\pm S_{n}}{\sqrt{2n^{1-2\gamma}\log\log n}}\le \frac{1}{\sqrt{1-2\gamma}}+\frac{\alpha}{\gamma\sqrt{1-2\alpha}}.\nonumber
\end{equation}
In a similar way, for $\gamma\in (\alpha, 1/2)$, we obtain, with probability one,
\begin{equation}
    \frac{1}{\sqrt{1-2\gamma}}-\frac{\alpha}{\gamma\sqrt{1-2\alpha}}\le \frac{\gamma-\alpha}{\gamma}\limsup_{n\to\infty} \frac{\pm S_{n}}{\sqrt{2n^{1-2\gamma}\log\log n}}\le \frac{1}{\sqrt{1-2\gamma}}+\frac{\alpha}{\gamma\sqrt{1-2\alpha}}.\nonumber
\end{equation}
\noindent
\ref{enum:Thm2_alphasub_gammacritical} Suppose that $\gamma=1/2$. By \eqref{eq:deffisive_LIL}, we have
\begin{equation}
    \limsup_{n\to\infty}\pm \frac{T_{n}/n^{1/2}}{\sqrt{\log n}}=\limsup_{n\to\infty}\pm \frac{T_{n}}{\sqrt{n\log n}}=0 \quad \text{a.s.}\nonumber
\end{equation}
Thus, the LIL for $\{S_{n}\}$ follows from Theorem \ref{thm:Mn} \ref{thm:Mn_critical}. Moreover, by \eqref{eq:Snineq} and Theorem \ref{thm:Mn} \ref{thm:Mn_critical}, we have
\begin{equation}
    \left|\frac{1/2-\alpha}{1/2}\frac{S_{n}}{\sqrt{\log n}}-\frac{M_{n}}{\sqrt{\log n}}\right|\to 0 \quad \text{a.s.},\nonumber
\end{equation}
which implies the CLT for $\{S_{n}\}$.

\ref{enum:Thm2_alphacritical} Assume that $\alpha=1/2$ and $\gamma\in (0,1/2)$. We obtain
\begin{equation}
    \limsup_{n\to\infty} \pm \frac{M_{n}}{\sqrt{2n^{1-2\gamma}\log n}}=0 \quad \text{a.s.}\nonumber
\end{equation}
by Theorem \ref{thm:Mn} \ref{thm:Mn_sub}. Therefore, by \eqref{eq:critical_LIL}, we obtain the LIL for $\{S_{n}\}$. In addition, by \eqref{eq:Snineq}, we also have
\begin{equation}
    \left|\frac{1/2-\gamma}{\gamma}\frac{S_{n}}{\sqrt{n^{1-2\gamma}\log n}}-\frac{T_{n}}{2\gamma\sqrt{n\log n}}\right|\to 0\quad \text{a.s.},\nonumber
\end{equation}
which implies the CLT for $\{S_{n}\}$.

\ref{enum:Thm2_alphasuper} We consider the case \(\alpha >1/2\). Let $L$ be the random variable defined by \eqref{eq:superdiffusive_almost_sure_convergence}. Since
\begin{center}
    $\displaystyle \sum_{k=1}^{n}\frac{1}{k^{\gamma-\alpha+1}}\sim \frac{1}{\alpha-\gamma}n^{\alpha-\gamma}$ if $\gamma<\alpha$,\quad and \quad$\displaystyle \displaystyle \sum_{k=1}^{n}\frac{1}{k^{\gamma-\alpha+1}}\sim \log n$ if $\gamma=\alpha$
\end{center}
as \(n\to \infty\), we have, with probability one,
    \begin{center}
        $\displaystyle \sum_{k=1}^{n}\frac{T_{k}}{k^{\gamma+1}}\sim \frac{L}{\alpha-\gamma}n^{\alpha -\gamma}$ if $\gamma<\alpha$,\quad and \quad $\displaystyle \sum_{k=1}^{n}\frac{T_{k}}{k^{\gamma+1}}\sim L\log n$ if $\gamma=\alpha$.
    \end{center}
    Therefore, we obtain, with probability one,
    \begin{center}
        $\displaystyle A_{n}\sim \frac{\alpha L}{\alpha-\gamma}n^{\alpha -\gamma}$ if $\gamma<\alpha$,\quad and\quad $A_{n}\sim \alpha L\log n$ if $\gamma=\alpha$.
    \end{center}
Thus, using \eqref{eq:relation_Sn_Tn_Mn}, the asymptotic behavior of $\{S_{n}\}$ is the same as $\{A_{n}\}$. Rearranging \eqref{eq:Snineq}, we have
\begin{equation}
\left|\frac{\alpha-\gamma}{\gamma}\left(S_{n}-\frac{\alpha L}{\alpha-\gamma}n^{\alpha-\gamma}\right)-\left(\frac{\alpha}{\gamma}\cdot \frac{T_{n}-Ln^{\alpha}}{n^{\gamma}}-M_{n}\right)\right|\le K\quad \text{a.s.}\nonumber
\end{equation}
If $\gamma\in (0,1/2)$, then we get, with probability one,
\begin{equation}
    \limsup_{n\to\infty} \frac{- M_{n}+\alpha(T_{n}-Ln^{\alpha})/(\gamma n^{\gamma}) }{\sqrt{2n^{1-2\gamma}\log\log n}}\ge \frac{1}{\sqrt{1-2\gamma}}-\frac{\alpha}{\gamma\sqrt{2\alpha-1}}=\frac{\gamma\sqrt{2\alpha-1}-\alpha \sqrt{1-2\gamma}}{\gamma\sqrt{(1-2\gamma)(2\alpha-1)}},\nonumber
\end{equation}
which is positive unless $\gamma=-(\alpha+\alpha\sqrt{\alpha^{2}+2\alpha-1})/(2\alpha-1)$. Moreover, if $\gamma=1/2$, then we have, with probability one,
\begin{equation}
    \limsup_{n\to\infty} \pm \frac{- M_{n}+\alpha(T_{n}-Ln^{\alpha})/(\gamma n^{\gamma}) }{\sqrt{2\log n \log\log\log n}}=\limsup_{n\to\infty} \pm \frac{-M_{n}}{\sqrt{2\log n\log\log\log n}}=1.\nonumber
\end{equation}
If $\gamma\in (1/2,\alpha)$, then $\{M_{n}\}$ converges a.s. and $(T_{n}-Ln^{\alpha})/n^{\gamma}\to 0$ a.s., which implies Theorem \ref{thm:Limittheorems} \ref{enum:Thm2_alphasuper} \ref{enum:Thm2_alphasuper_gammasub} and \ref{enum:Thm2_alphasub_gammacritical}. The proof of Theorem \ref{thm:Limittheorems} \ref{enum:Thm2_alphasuper} \ref{enum:Thm2_alphasuper_gammasuper} is postponed to the next subsection. \qed
\subsection{Proof of Theorem \ref{thm:main}}\label{subsec:proof_main}
Note that \eqref{eq:subdiverge} and \eqref{eq:superdiver} follow from Theorem \ref{thm:Limittheorems}. Thus, we concentrate on the case where $\{S_{n}\}$ converges. By Theorem \ref{thm:Mn}, $\{M_{n}\}$ converges with probability one if and only if $\gamma>1/2$. We consider $\{A_{n}\}$.
    Suppose that \(\alpha \le 1/2\). If \(\gamma  > 1/2\), then {\color{black}$\sum_{k=1}^{n}\frac{T_{k}}{k^{\gamma+1}}$} is absolutely convergent almost surely. Indeed, from the LIL for $\{T_{n}\}$ (see \eqref{eq:deffisive_LIL} and \eqref{eq:critical_LIL}), we can deduce that if $\alpha\le 1/2$, then $\displaystyle \lim_{n\to \infty} \frac{T_{n}}{\sqrt{n}\log n}=0$ a.s. Thus, with probability one, there exists a positive constant $C_{1}$ such that
\begin{equation}
    \sum_{k=1}^{n}\frac{|T_{k}|}{k^{\gamma+1}}\le \sum_{k=1}^{n}\frac{C_{1}\sqrt{k}\log k}{k^{\gamma+1}}=C_{1}\sum_{k=1}^{n}\frac{\log k}{k^{\gamma+1/2}}.\nonumber
\end{equation}
It follows from \eqref{eq:A_n} and \eqref{eq:mean} that $\{A_{n}\}$ converges a.s. if $\alpha\le 1/2$ and $\gamma>1/2$. Therefore, if $\alpha\le 1/2$ and $\gamma>1/2$, then $\{S_{n}\}$ converges a.s.

In the case $\alpha>1/2$ and $\gamma>\alpha$, by \eqref{eq:superdiffusive_almost_sure_convergence}, there exists a positive random variable \(C_{2}\) such that
    \begin{equation}
        \sum_{k=1}^{n}\frac{|T_{k}|}{k^{\gamma+1}}\le C_{2}\sum_{k=1}^{n}\frac{1}{k^{\gamma-\alpha +1}} \quad \text{a.s.}\nonumber
    \end{equation}
    Thus, \(\{A_{n}\}\) converges almost surely. Let $\displaystyle A_{\infty}\coloneq \lim_{n\to\infty} A_{n}$ a.s. Since there exists a positive random variable $C_{3}$ such that
    \begin{equation}
        \left|A_{\infty}-A_{n}\right|=\left|\sum_{k=n+1}^{\infty} \frac{T_{k}}{k(k+1)^{\gamma}}\right|\le C_{3} n^{\alpha-\gamma}\quad \text{a.s.,}\nonumber
    \end{equation}
    we have \eqref{eq:superdiffusive_gammasuper}. The martingale part $\{M_{n}\}$ also converges a.s. and $M_{n}-M_{\infty}=o(n^{\alpha-\gamma})$ a.s. by Theorem \ref{thm:Mn} \ref{thm:Mn_super}. This completes the proof of Theorem \ref{thm:main} and Theorem \ref{thm:Limittheorems} \ref{enum:Thm2_alphasuper} \ref{enum:Thm2_alphasuper_gammasuper}.\qed
\section*{Acknowledgements}
I am very grateful to my supervisor Masato Takei for insightful discussions. Furthermore, I would like to extend my thanks to Professor Hideki Tanemura for helpful advises which enable me to improve the result in an earlier draft. Last but not least, I thank reviewers for constructive and helpful comments.

\begin{appendix}
    \section{$L^{2}$-convergence for $\gamma>\gamma_{c}(\alpha)$}\label{sec:A1}
\begin{theorem}\label{thm:Sn_L2convergent}
  If $\alpha\in [-1,1)$ and $\gamma>\gamma_{c}(\alpha)$, then $\displaystyle \lim_{n\to\infty} S_{n}=S_{\infty}$ in $L^{2}$.
\end{theorem}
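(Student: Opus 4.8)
The plan is to exploit the Doob decomposition $S_{n}=M_{n}+A_{n}$ from \eqref{eq:hatSn_decomposition} and to treat the two parts separately. Since $\gamma>\gamma_{c}(\alpha)\ge 1/2$, the martingale part is already settled: by Theorem \ref{thm:Mn} \ref{thm:Mn_super}, $M_{n}\to M_{\infty}$ in $L^{2}$, the $L^{2}$-convergence being driven exactly by $\sum_{k=1}^{\infty}E[(d_{k})^{2}]<\infty$ for $\gamma>1/2$. It therefore remains to prove that $\{A_{n}\}$ converges in $L^{2}$, after which $S_{n}=M_{n}+A_{n}$ converges in $L^{2}$ as a sum of two $L^{2}$-convergent sequences.

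For the drift part I would use the closed form \eqref{eq:A_n}, namely $A_{n}=\beta+\alpha\sum_{k=1}^{n-1}T_{k}/(k(k+1)^{\gamma})$, so that for $n<m$ the Minkowski inequality in $L^{2}$ gives $\|A_{m}-A_{n}\|_{2}\le |\alpha|\sum_{k=n}^{m-1}\|T_{k}\|_{2}\,k^{-1}(k+1)^{-\gamma}$, where $\|T_{k}\|_{2}=E[(T_{k})^{2}]^{1/2}$. Hence $\{A_{n}\}$ is Cauchy in $L^{2}$ as soon as the series $\sum_{k}\|T_{k}\|_{2}\,k^{-(1+\gamma)}$ converges. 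The second-moment asymptotics \eqref{eq:T_n_second_moment} give $\|T_{k}\|_{2}$ of order $k^{1/2}$ for $\alpha<1/2$, of order $(k\log k)^{1/2}$ for $\alpha=1/2$, and of order $k^{\alpha}$ for $\alpha>1/2$. Plugging these in, the general term of the series is of order $k^{-1/2-\gamma}$, respectively $k^{-1/2-\gamma}(\log k)^{1/2}$, respectively $k^{\alpha-1-\gamma}$, which is summable precisely when $\gamma>1/2$, respectively $\gamma>1/2$, respectively $\gamma>\alpha$. In every regime this is exactly the hypothesis $\gamma>\gamma_{c}(\alpha)=\max\{\alpha,1/2\}$, so $\{A_{n}\}$ converges in $L^{2}$ and the theorem follows.

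The only genuinely substantive point is the bookkeeping in the final step: the three summability thresholds must be checked to coincide with the single critical exponent $\gamma_{c}(\alpha)$, and this is precisely where the phase transition reappears at the level of $L^{2}$-convergence. I do not expect analytic difficulty beyond this, since the argument uses only the triangle inequality together with the known growth of $E[(T_{k})^{2}]$; no martingale convergence theorem or refined control of the dependence structure of the $\{T_{k}\}$ is required, because the Minkowski bound deliberately discards all cross-correlations and retains only the marginal $L^{2}$-norms. (As a byproduct one also obtains semi-explicit formulae for $E[S_{\infty}]$ and $E[(S_{\infty})^{2}]$ by passing to the limit in the averages and second moments of $S_{n}$, as alluded to in the remark following Theorem \ref{thm:main}.)
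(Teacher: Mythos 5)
Your proposal is correct, and for the drift part it takes a genuinely different route from the paper. Both arguments dispose of the martingale part identically via Theorem \ref{thm:Mn} \ref{thm:Mn_super}, but for $\{A_{n}\}$ the paper expands $E[(A_{n+s}-A_{n})^{2}]$ as a double sum and controls the cross terms exactly, using the covariance identity $E[T_{l}T_{m}]=\frac{a_{m}}{a_{l}}E[(T_{l})^{2}]$ for $l<m$ together with Fatou's lemma to pass to the limit $A_{\infty}$; you instead apply Minkowski's inequality to $\|A_{m}-A_{n}\|_{2}\le|\alpha|\sum_{k=n}^{m-1}\|T_{k}\|_{2}k^{-1}(k+1)^{-\gamma}$ and reduce everything to the summability of $\sum_{k}\|T_{k}\|_{2}k^{-(1+\gamma)}$, which by \eqref{eq:T_n_second_moment} holds exactly when $\gamma>\gamma_{c}(\alpha)$ in each of the three regimes (your bookkeeping there is right, including the harmless $(\log k)^{1/2}$ factor at $\alpha=1/2$). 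Your argument is shorter and more elementary: it needs only the marginal second moments and the completeness of $L^{2}$, discarding the dependence structure of $\{T_{k}\}$ entirely, at the cost of a one-line remark identifying the $L^{2}$-limit of the Cauchy sequence with the almost sure limit $A_{\infty}$ (pass to an a.s. convergent subsequence), which the paper's Fatou argument gets for free. What the paper's heavier computation buys is the explicit covariance structure, which is what makes the semi-explicit formula for $E[(S_{\infty})^{2}]$ mentioned in the appendix accessible; but for the convergence statement itself your approach suffices.
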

\begin{proof}
  By Theorem \ref{thm:Mn} \ref{thm:Mn_super}, if $\gamma>1/2$, then $M_{n}\to M_{\infty}$ in $L^{2}$. We show that $A_{n}\to A_{\infty}$ in $L^{2}$ if $\gamma>\gamma_{c}(\alpha)$. By Fatou's lemma, for each $n$, we have
  \begin{equation}
    E[(A_{\infty}-A_{n})^{2}]\le \liminf_{s\to\infty}E[(A_{n+s}-A_{n})^{2}].\label{eq:fatou_ineq}
  \end{equation}
  If $l<m$, then
  \begin{equation}
    E[T_{l}T_{m}]=E[T_{l}E[T_{m}\mid \mathcal{F}_{m-1}]]=\!\left(1+\frac{\alpha}{m-1}\right)E[T_{l}T_{m-1}]=\cdots = \frac{a_{m}}{a_{l}}E[(T_{l})^{2}],\label{eq:correlation}
  \end{equation}
  where $a_{n}$ is defined by \eqref{eq:a_n}. Therefore, by \eqref{eq:correlation},
  \begin{align}
    E[(A_{n+s}-A_{n})^{2}]&=E\left[\alpha^{2}\left(\sum_{k=n}^{n+s-1}\frac{T_{k}}{k(k+1)^{\gamma}}\right)^{2}\right]\nonumber\\
    &=\alpha^{2} \sum_{k=n}^{n+s-1}\frac{E[(T_{k})^{2}]}{k^2(k+1)^{2\gamma}}+2\alpha^{2}\sum_{l=n}^{n+s-2} \sum_{m=l+1}^{n+s-1} \frac{a_{m}}{a_{l}} \frac{E[(T_{l})^{2}]}{l(l+1)^{\gamma}m(m+1)^{\gamma}}\nonumber\\
    &=\alpha^{2} \sum_{k=n}^{n+s-1}\frac{E[(T_{k})^{2}]}{k^2(k+1)^{2\gamma}} + 2\alpha^{2}\sum_{l=n}^{n+s-2}b_{l} \sum_{m=l+1}^{n+s-1} c_{m},\label{eq:A_n_secondmoment}
  \end{align}
  where
  \begin{equation}
    b_{l}\coloneq \frac{\Gamma(l)}{\Gamma(l+\alpha)}\cdot \frac{E[(T_{l})^{2}]}{l(l+1)^{\gamma}}, \quad c_{m}\coloneq \frac{\Gamma(m+\alpha)}{\Gamma(m)}\cdot\frac{1}{m(m+1)^{\gamma}}.\nonumber
  \end{equation}
  It follows from \eqref{eq:T_n_second_moment} that $\sum_{k=1}^{\infty}\frac{E[(T_{k})^{2}]}{k^2(k+1)^{2\gamma}}<+\infty$. Since $\displaystyle c_{m} \sim \frac{1}{m^{1+\gamma-\alpha}}$ as $m\to\infty$, we can find $K>0$ such that $\sum_{m=l+1}^{\infty} c_{m}\le Kl^{-(\gamma-\alpha)}$ for any $l$. Thus, the second term in \eqref{eq:A_n_secondmoment} is bounded by $2K\alpha^{2}\sum_{l=n+1}^{n+s-2}b_{l}l^{-(\gamma-\alpha)}$. Using \eqref{eq:T_n_second_moment}, it is straightforward to see that $\sum_{l=1}^{\infty} b_{l}l^{-(\gamma-\alpha)}<+\infty$. By \eqref{eq:fatou_ineq}, we have $\displaystyle \lim_{n\to\infty} E[(A_{\infty}-A_{n})^{2}]=0$.
\end{proof}
As a consequence of the above theorem, we obtain
\begin{equation}
    E[S_{\infty}]=\lim_{n\to\infty} E[S_{n}]=\beta+\frac{\alpha\beta}{\Gamma(1+\alpha)}\sum_{k=1}^{\infty} \frac{\Gamma(k+\alpha)}{k!(k+1)^{\gamma}}.\nonumber
\end{equation}
Similarly, we can obtain an expression of $E[(S_{\infty})^{2}]$, which looks very complicated and is omitted here.
\end{appendix}

\end{document}